\newtheorem{theorem}{Theorem}[section]
\newtheorem{lemma}[theorem]{Lemma}
\newtheorem{proposition}[theorem]{Proposition}
\newtheorem{question}[theorem]{Question}
\theoremstyle{corollary}
\newtheorem{corollary}[theorem]{Corollary}
\theoremstyle{definition} 
\newtheorem{definition}[theorem]{Definition}
\newtheorem{example}[theorem]{Example}
\newtheorem{claim}[theorem]{Claim}
\theoremstyle{remark}
\newtheorem{remark}[theorem]{Remark}
\numberwithin{equation}{section}
\newcommand{\R}{\mathbb{R}}
\newcommand{\Z}{\mathbb{Z}}
\newcommand{\Q}{\mathbb{Q}}
\def\P{\mathbb{P}}
\def\Pic{\operatorname{Pic}}
\def\Proj{\operatorname{Proj}}
\def\Spec{\operatorname{Spec}}
\def\mult{\operatorname{mult}}
\def\Null{\operatorname{Null}}
\def\Cone{\operatorname{Cone}}
\def\Nef{\operatorname{Nef}}
\def\Ample{\operatorname{Ample}}
\def\cha{\operatorname{char}}
\def\proj{\operatorname{proj}}
\title[Characterization of  log del Pezzo pairs]{Characterization of log del Pezzo pairs via  anticanonical models}
\begin{document}

\author{DongSeon Hwang}
\address{Department of Mathematics, Ajou University, Suwon, Korea}
\email{dshwang@ajou.ac.kr}

\author{Jinhyung Park}
\address{Department of Mathematical Sciences, KAIST, Daejeon, Korea}
\email{parkjh13@kaist.ac.kr}

\subjclass[2010]{Primary 14J45, Secondary 14J26, 13A35}
\date{April 23, 2013}
\keywords{log del Pezzo pair, big anticanonical surface, anticanonical model, globally F-regular.}

\begin{abstract}
There are several variations of the definition of log del Pezzo pairs in the literature. We define their suitable smooth models, and we show that they are the same. In particular, we obtain a characterization of smooth log del Pezzo pairs in terms of anticanonical models. As applications, we classify non-rational weak log canonical del Pezzo pairs, and we prove that every surface of globally F-regular type is of Fano type.
\end{abstract}

\maketitle
\tableofcontents

\section{Introduction}
Throughout this paper, we work over an algebraically closed field $k$ of arbitrary characteristic. \emph{Del Pezzo surfaces} have been extensively studied from various viewpoints. In particular, considerable effort has been devoted to the classification of log del Pezzo surfaces. Recently, \emph{log del Pezzo pairs} frequently appear motivated by recent developments in the minimal model program. Depending on the problems of interest, there are several variations of the definition of log del Pezzo pairs (see Section \ref{prelim} for our terminology of the precise definitions of various log del Pezzo pairs). However, they are not very well understood from the viewpoint of classification.

The first main theorem of this paper gives a characterization of log del Pezzo pairs in terms of their anticanonical models.

\begin{theorem}\label{main}
Let $X$ be a smooth projective surface with big anticanonical divisor.
Then, $(X, \Delta)$ is a klt (resp. weak lc) del Pezzo pair for some effective $\Q$-divisor  $\Delta$ on $X$  if and only if the anticanonical model of $X$ is a klt (resp. lc) del Pezzo surface.
\end{theorem}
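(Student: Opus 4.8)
The plan is to fix the anticanonical morphism $\pi\colon X \to Y$ to the anticanonical model and to funnel both notions—the existence of a good boundary on $X$ and the singularity type of $Y$—through a single numerical divisor $N$ on $X$, defined by $K_X = \pi^*K_Y - N$. Writing $N = \sum_i n_i E_i$ over the $\pi$-exceptional curves $E_i$, the construction of the anticanonical model gives $N \ge 0$, gives that $-K_Y$ is ample and $\Q$-Cartier, and gives $K_X + N = \pi^*K_Y$, so that $(X,N)$ is a crepant pullback of $(Y,0)$. Consequently $(Y,0)$ is klt (resp. lc) if and only if $(X,N)$ is, while $-K_Y$ is ample for free; thus the whole statement reduces to comparing the fixed pair $(X,N)$ with the sought-for pairs $(X,\Delta)$.

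For the direction assuming a boundary $\Delta$, I would start from $K_X+\Delta$ and set $D := N - \Delta^{\mathrm{exc}}$, where $\Delta^{\mathrm{exc}}$ is the $\pi$-exceptional part of $\Delta$ and $\widetilde{\Delta}$ the non-exceptional part. Intersecting the identity $-(K_X+\Delta) = \pi^*(-K_Y) - \widetilde{\Delta} + D$ against an exceptional curve $E_j$ yields $D\cdot E_j = -(K_X+\Delta)\cdot E_j + \widetilde{\Delta}\cdot E_j \ge 0$, using that $-(K_X+\Delta)$ is nef (this is all that is needed, so the ample and the nef-and-big hypotheses are treated uniformly) and that $\widetilde{\Delta}$, having no exceptional component, meets $E_j$ non-negatively. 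Since $D$ is exceptional and $\pi$-nef, the Negativity Lemma forces $D\le 0$, i.e. $0 \le N \le \Delta$. As $(X,\Delta)$ is klt (resp. lc) and $N$ is an effective sub-boundary, $(X,N)$ is again klt (resp. lc), hence so is $(Y,0)$; together with ampleness of $-K_Y$ this makes $Y$ a klt (resp. lc) del Pezzo surface.

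For the converse I would take $\Delta = N$ as the first candidate. Crepancy gives that $(X,N)$ is klt (resp. lc) whenever $Y$ is, while $-(K_X+N) = \pi^*(-K_Y)$ is nef and big; this already exhibits a weak lc del Pezzo pair in the lc case. In the klt case one must upgrade nef-and-big to ample, which I would do via Kodaira's lemma: write $\pi^*(-K_Y) = A + E'$ with $A$ ample and $E'\ge 0$, and set $\Delta = N + tE'$ for small $t>0$. Then $-(K_X+\Delta) = (1-t)\pi^*(-K_Y) + tA$ is ample, and since klt is an open condition $(X,N+tE')$ remains klt for $t$ small; thus $(X,\Delta)$ is a klt del Pezzo pair.

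The substantive obstacle lies not in these discrepancy manipulations—which are elementary surface intersection theory valid in every characteristic—but in the very existence and structure of the anticanonical model that I used as input: namely that, for $X$ smooth with $-K_X$ big, the anticanonical ring is finitely generated, its positive part $P = \pi^*(-K_Y)$ is semiample, and $\pi$ contracts exactly $\Null(P)$, so that $N$ is genuinely $\pi$-exceptional and $-K_Y$ is ample and $\Q$-Cartier. Establishing this—especially over a field of positive characteristic, where the base-point-free theorem is unavailable and one must instead rely on Zariski decomposition together with Artin/Keel-type contractibility and semiampleness criteria—is where the real work resides, and I expect it to be furnished by the structural results of the preliminary section.
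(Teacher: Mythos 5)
Your reduction to comparing $\Delta$ with the negative part $N$ of the Zariski decomposition is sound and matches the paper's mechanism in both directions: the negativity-lemma computation giving $N \le \Delta$ is essentially the argument of Subsection \ref{ESLD}, and your converse via $\Delta = N$ in the lc case, upgraded by a Kodaira-lemma perturbation $N + tE'$ in the klt case, is exactly how the paper argues in Subsection \ref{LCNS} and in the proof of Corollary \ref{cri1} (the harder construction of Subsection \ref{LDNS}, with $L$ supported on $\Null(P)$, is needed only to make the boundary snc for Theorem \ref{klt}, not for Theorem \ref{main} as stated). The genuine gap is in your standing hypotheses about the anticanonical model: you assert, and in your final paragraph explicitly defer to the preliminaries for, the statements that $R(-K_X)$ is finitely generated, that $P$ is semiample, and that $-K_Y$ is ample and $\Q$-Cartier. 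These are \emph{false} for a general big anticanonical surface: the paper's Section \ref{prelim} points out, citing \cite[Lemma 14.39]{B01}, that $R(-K_X)$ need not be finitely generated, and correspondingly the anticanonical model $Y$, defined via the Zariski decomposition, need not have $\Q$-Cartier canonical class. No structural result can furnish what you ask for, because $\Q$-Cartierness of $K_Y$ is part of what must be \emph{proved from the existence of the boundary} $\Delta$; it is not an input, in any characteristic.

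Concretely, this breaks your forward direction at its first step: without $K_Y$ being $\Q$-Cartier, the crepant identity $K_X + N = \pi^{*}K_Y$ and the transfer ``$(Y,0)$ is klt (resp.\ lc) if and only if $(X,N)$ is'' are not even defined. The paper's fix, which your argument needs, is to perform all pullbacks in Mumford's numerical sense (your intersection computations survive verbatim, since they only use $\pi^{*}K_Y \cdot E_j = 0$ and nefness of $-(K_X+\Delta)$), conclude $N \le \Delta$, so that every coefficient of $N$ is $<1$ (resp.\ $\le 1$) and $(Y,0)$ is \emph{numerically} klt (resp.\ numerically lc), and only then invoke the criterion of \cite[p.112]{KM98} that a numerically lc surface pair has $\Q$-Cartier canonical class; ampleness of $-K_Y$ then follows from Nakai--Moishezon, since $P^{2}>0$ and $P$ is strictly positive on non-contracted curves. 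Your converse direction is unaffected, since there the hypothesis is precisely that $Y$ is a klt (resp.\ lc) del Pezzo surface, so $-K_Y$ is ample $\Q$-Cartier and $P = \pi^{*}(-K_Y)$ by \cite[Proposition 14.26]{B01}; but your diagnosis that the ``real work'' lies in semiampleness and finite generation for arbitrary big anticanonical surfaces is misdirected, since in that generality those statements simply fail.
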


The birational structure of smooth projective surfaces whose anticanonical models are log del Pezzo surfaces is well understood by the comprehensive study of Sakai and B\u{a}descu, and  by the  authors' explicit study on redundant points (see Section \ref{prelim}).  Thus, Theorem \ref{main} reduces the problem of classification of klt (resp. weak lc) del Pezzo pairs to that of klt (resp. lc) del Pezzo surfaces.

Theorem \ref{main} is a part of Theorem \ref{klt} and Theorem \ref{lc}, in which we show that suitable smooth models of various versions of the log del Pezzo pair are the same.

To state Theorem \ref{klt}, we introduce the following five classes.
\begin{enumerate}
\item $LD:=$\{big anticanonical surfaces whose anticanonical models are klt del Pezzo surfaces\}
\item $ES:=$\{$X \mid (X, \Delta_X)$ is a klt del Pezzo pair for some effective $\Q$-divisor $\Delta_X$,  where $X$ is smooth\}
\item $NS:=$\{$X \mid (X, \Delta_X)$ is a klt del Pezzo pair for some simple normal crossing (\emph{snc} for short)    effective $\Q$-divisor $\Delta_X$, where $X$ is smooth\}
\item $EP:=$\{$X \mid f \colon X \rightarrow Y$ is a log resolution of  a klt del Pezzo pair $(Y,\Delta_Y)$ such that $-(K_X + f_*^{-1}\Delta_Y) + f^*(K_Y + \Delta_Y)$  is effective\}
\item $NP:=$\{$X \mid f \colon X \rightarrow Y$ is the minimal resolution, where $(Y,\Delta_Y)$ is a klt del Pezzo pair for some effective snc $\Q$-divisor $\Delta_Y$\}
\end{enumerate}

\begin{theorem}\label{klt}
$LD=ES=NS=EP=NP$.
\end{theorem}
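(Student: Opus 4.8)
The plan is to prove $LD = ES = NS = EP = NP$ by establishing a cycle of inclusions among these five classes.
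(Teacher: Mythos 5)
Your proposal is only a statement of strategy, not a proof: it names the standard technique (a cycle of inclusions) but supplies no argument for any single inclusion, and for this theorem essentially all of the mathematical content lives in those inclusions. Three of them are genuinely nontrivial. For $LD \subset NS$ you must produce, on a big anticanonical surface $X$ with Zariski decomposition $-K_X = P+N$ whose anticanonical model is a klt del Pezzo surface, an snc boundary making $(X,\Delta)$ a klt del Pezzo pair; the paper does this by setting $\Delta = N + \epsilon L$, where $L$ is an effective $\Q$-divisor supported on $\Null(P)$ with $L.E_i < 0$ for every $E_i \in \Null(P)$ (constructed via negative definiteness of the intersection matrix of $\Null(P)$ and the negativity lemma), and then verifying ampleness of $P - \epsilon L$ using the rational polyhedrality of $\overline{NE}(X)$; snc-ness of $\Null(P)$ itself requires an argument (Lemma 2.7 of the paper, ruling out contracted nodal cubics). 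For $ES \subset LD$ one needs Mumford's pull-back for $\Q$-Weil divisors, the negativity lemma, and the observation that a numerically lc pair $(Y,0)$ automatically has $\Q$-Cartier $-K_Y$, to exclude both failure modes of the anticanonical model. For $EP \subset LD$ one must first show $-K_X$ is big and then exclude a non-klt singularity on the anticanonical model via Sakai's decomposition $|-nK_X| = |nP| + nN$, combined with Kodaira's lemma and Bertini to produce an effective $\Q$-divisor in $|-K_X|$ with trivial round-down. None of these steps appears in your proposal, and none is routine.

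There is also a structural point worth noting: the paper does not in fact use a single cycle through all five classes, but two chains sharing the pivot class $LD$, namely $NS \subset ES \subset LD \subset NS$ and $NS \subset NP \subset EP \subset LD$. This is not cosmetic: some direct inclusions one would need for a genuine five-term cycle (e.g.\ $ES \subset NS$, converting an arbitrary effective klt boundary into an snc one, or $EP \subset NP$, passing from a log resolution to a minimal resolution) are not accessible directly and are only obtained by routing through $LD$, i.e.\ through the Zariski decomposition of $-K_X$. So even the combinatorial shape of your plan would need adjustment before the individual inclusions could be filled in.
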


The proof is constructive and the key step is to show that $LD \subset NS$ by explicitly taking an effective $\Q$-divisor $\Delta$ supported on $\Null(P)$, where $-K_X=P+N$ is the Zariski decomposition (see Subsection \ref{LDNS}).

As the class of smooth models for klt del Pezzo surfaces, it is tempting to consider, instead of $LD$, the class
$$  \{ X \mid X \rightarrow Y \textrm{ is the minimal resolution of some klt del Pezzo surface } Y \}.$$
However, there are examples of a big anticanonical rational surface that is not a minimal resolution of any klt del Pezzo surface, even though the anticanonical model is a klt del Pezzo surface (see  \cite[Section 4]{HP}). In general, the anticanonical map in the class $LD$ is decomposed by the minimal resolution of the klt del Pezzo surface followed by a sequence of special blow-ups called redundant blow-ups (\cite[Proposition 4.2]{Sak84} and \cite[Sections 2 and 3]{HP}). Thus, our result implies that in characteristic zero, the singularity on a klt del Pezzo pair is precisely  a quotient singularity modulo a composition of  redundant blow-ups.

The effectivity of the divisor  $-(K_X + f_*^{-1}\Delta_Y)   + f^{*}(K_Y + \Delta_Y)$ in the definition of the class $EP$ is essential in order to make $X$ a big anticanonical surface. Note that there is a klt del Pezzo pair whose log resolution is never a big anticanonical surface (see Example \ref{ex}). On the other hand, by choosing another boundary divisor, we can avoid this situation.

\begin{proposition}\label{newbd}
Let $(Y, \Delta_Y)$ be a klt del Pezzo pair. Then, there exists an effective $\Q$-divisor $\Delta_{Y}'$ and a log resolution $f \colon X \rightarrow Y$ of the pair $(Y, \Delta_{Y}')$ such that $(Y, \Delta_{Y}')$  is a klt del Pezzo pair  and   $-(K_X + f_*^{-1}\Delta_{Y}')   + f^{*}(K_Y + \Delta_Y)$
is effective.
\end{proposition}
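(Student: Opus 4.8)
The plan is to reduce the whole question to the minimal resolution and to produce a log resolution all of whose exceptional divisors have non-positive discrepancy with respect to $(Y,\Delta_Y)$. Let $g\colon Y^{\min}\to Y$ be the minimal resolution and write $K_{Y^{\min}}+\Delta^{\min}=g^*(K_Y+\Delta_Y)$, so that $g$ is crepant and hence $a(E;Y,\Delta_Y)=a(E;Y^{\min},\Delta^{\min})$ for every divisor $E$ over $Y$. Since $(Y,\Delta_Y)$ is klt, the coefficients of $\Delta^{\min}$ lie in $[0,1)$, and since $-K_{Y^{\min}}=g^*\bigl(-(K_Y+\Delta_Y)\bigr)+\Theta$ with $\Theta$ an effective divisor supported on $g_*^{-1}\Delta_Y$ and the $g$-exceptional curves, the surface $Y^{\min}$ is already a big anticanonical surface (a nef and big divisor plus an effective one). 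The $g$-exceptional curves satisfy $a(\,\cdot\,;Y,\Delta_Y)\le 0$ and are therefore harmless; the entire difficulty is concentrated in the extra blow-ups needed to make the boundary simple normal crossing.

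For a blow-up of a point $p$ over $Y^{\min}$ the discrepancy of the new exceptional divisor $F$ is $1-\mult_p\Delta^{\min}$ (computed on the relevant intermediate model), so $a(F;Y,\Delta_Y)\le 0$ precisely when $\mult_p\Delta^{\min}\ge 1$. I would therefore take $f\colon X\to Y$ to factor through $g$ and to perform only those blow-ups whose centers have $\Delta^{\min}$-multiplicity at least $1$; then every exceptional divisor $E_i$ of $f$ satisfies $a(E_i;Y,\Delta_Y)\le 0$. Using $f^*(K_Y+\Delta_Y)=K_X+f_*^{-1}\Delta_Y-\sum_i a(E_i;Y,\Delta_Y)E_i$, one gets the identity
\[
-(K_X+f_*^{-1}\Delta_Y')+f^*(K_Y+\Delta_Y)=f_*^{-1}(\Delta_Y-\Delta_Y')-\sum_i a(E_i;Y,\Delta_Y)\,E_i,
\]
so that choosing any $\Delta_Y'$ with $0\le\Delta_Y'\le\Delta_Y$ makes both summands effective. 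Moreover $-K_X=f^*\bigl(-(K_Y+\Delta_Y)\bigr)+f_*^{-1}\Delta_Y'+\bigl(\text{this effective divisor}\bigr)$ simultaneously exhibits $X$ as a big anticanonical surface, which is the geometric content behind the effectivity.

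It remains to choose $\Delta_Y'$ so that $f$ is an honest log resolution of $(Y,\Delta_Y')$ and $(Y,\Delta_Y')$ is again a klt del Pezzo pair. The low-multiplicity non-snc centers that $f$ deliberately leaves unresolved must be removed from the boundary, so I would let $\Delta_Y'$ be obtained from $\Delta_Y$ by deleting exactly those components or local branches responsible for them and leaving the surviving coefficients unchanged; then $f_*^{-1}\Delta_Y'\cup\operatorname{Exc}(f)$ is simple normal crossing, $f$ is a log resolution of $(Y,\Delta_Y')$, and $(Y,\Delta_Y')$ is automatically klt because $\Delta_Y'\le\Delta_Y$. The main obstacle is preserving ampleness: $-(K_Y+\Delta_Y')=-(K_Y+\Delta_Y)+(\Delta_Y-\Delta_Y')$ is only the sum of an ample and an effective divisor and need not be ample a priori. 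I expect to handle this by deleting as little as possible and invoking the openness of ampleness together with the room available in the ample cone of the del Pezzo surface, and, when a whole component is genuinely forced to be dropped, by first replacing $\Delta_Y$ with a suitably general member of its own $\Q$-linear system so that the numerical class is kept equal to the ample class $-(K_Y+\Delta_Y)$ while the support becomes mild enough that no low-multiplicity blow-up is forced. Granting this, the klt del Pezzo property of $(Y,\Delta_Y')$ is routine, and the effectivity and bigness assertions follow at once from the displayed computation.
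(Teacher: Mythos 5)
There is a genuine gap, and it sits exactly where you flag it: the ampleness of $-(K_Y+\Delta_Y')$. Your whole design forces $\Delta_Y'\le\Delta_Y$, because your effectivity identity $-(K_X+f_*^{-1}\Delta_Y')+f^*(K_Y+\Delta_Y)=f_*^{-1}(\Delta_Y-\Delta_Y')-\sum_i a(E_i;Y,\Delta_Y)E_i$ needs the first summand to be effective. But then every component whose singularity or tangency sits at a point of $\Delta^{\min}$-multiplicity $<1$ must be dropped from the boundary \emph{entirely}: ``deleting local branches'' is not an operation on divisors, so a cuspidal component of coefficient $b<1/2$, or a component tangent to another where the coefficients sum to less than $1$, loses its whole coefficient $b$. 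Now $-(K_Y+\Delta_Y')=-(K_Y+\Delta_Y)+bC$ is ample plus a fixed-size effective perturbation, and ampleness requires $-bC^2<-(K_Y+\Delta_Y).C$, which can fail when $C^2$ is sufficiently negative; nothing in your setup excludes this. Neither of your proposed repairs works: openness of ampleness is irrelevant because $bC$ is not a small perturbation (its size is dictated by the coefficient that must be deleted, and you cannot keep any positive multiple of $C$ in $\Delta_Y'$ if $f$ is to be a log resolution); and replacing $\Delta_Y$ by a general member of its $\Q$-linear system fails twice over --- $\Delta_Y$ may be rigid (e.g.\ supported on curves of negative self-intersection), so no general member exists, and even when one does, the new boundary is no longer $\le\Delta_Y$, so the term $f_*^{-1}(\Delta_Y-\Delta_Y')$ in your identity ceases to be effective (the divisor $f^*(K_Y+\Delta_Y)$ in the statement is a fixed divisor, not a numerical class). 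A smaller but real secondary issue: blow-ups at multiplicity-$\ge 1$ points can create new non-snc points of multiplicity $<1$ (tangencies of exceptional curves with strict transforms), so the deletion step compounds rather than terminates cleanly.

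It is worth comparing with what the paper actually does, because the paper sidesteps precisely this obstacle by \emph{not} requiring $\Delta_Y'\le\Delta_Y$. (Note also that in the paper's own proof the effectivity condition is $-(K_X+f_*^{-1}\Delta_Y')+f^*(K_Y+\Delta_Y')$, with the new boundary in both slots --- the $\Delta_Y$ in the proposition's display is a typo --- which frees the construction from $\Delta_Y$ completely; you proved the literal, stronger-looking reading, which is exactly what made your task intractable.) The paper takes $f\colon X\to Y$ to be the minimal resolution (so no extra blow-ups at all and all discrepancies are non-positive), shows $(X,\Delta_X+\sum b_iE_i)$ is a weak klt del Pezzo pair with $0\le b_i<1$, deduces via Kodaira's lemma that $X$ lies in the class $ES$, and then invokes Theorem \ref{klt} (the inclusion $LD\subset NS$) to manufacture a brand-new snc boundary $\Delta_X''$ supported on $\Null(P)$, where $-K_X=P+N$ is the Zariski decomposition. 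Setting $\Delta_Y':=f_*\Delta_X''$, Lemmas \ref{snc} and \ref{adj} show $f$ is automatically a log resolution of $(Y,\Delta_Y')$ because strict transforms and exceptional curves all live inside the snc divisor $\Null(P)$, and Lemma \ref{image} delivers both ampleness and klt-ness of $(Y,\Delta_Y')$ via Nakai--Moishezon and the negativity lemma. In short, the new boundary is sourced from the Zariski decomposition rather than carved out of $\Delta_Y$; to repair your argument you would essentially have to abandon the constraint $\Delta_Y'\le\Delta_Y$ and follow the same route.
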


In fact, we show that the minimal resolution $X$ of $Y$ is contained in the class $ES$; thus, there is an snc effective $\Q$-divisor $\Delta_X$ such that $(X, \Delta_X)$ is a klt del Pezzo pair by Theorem \ref{klt}. Then, the proposition easily follows by letting $\Delta_{Y}':=f_{*}\Delta_X$. See Section \ref{bd_s} for a complete proof.

\begin{remark}
In general, log resolutions of $(Y, \Delta_{Y})$ are quite different from those of $(Y, \Delta_{Y}')$
\end{remark}

Theorem \ref{klt} cannot be extended directly  to the \emph{lc del Pezzo} case. Indeed, if the anticanonical model of a big anticanonical surface $X$ has a non-klt lc singularity, then there is no effective $\Q$-divisor $\Delta$ such that $(X, \Delta)$ is an lc del Pezzo pair (see Remark \ref{no_lc}). However, the analogous statement holds for the \emph{weak lc del Pezzo} case. For this purpose, we introduce the following five additional classes.

\begin{enumerate}
\item $LCD:=$\{big anticanonical surfaces whose anticanonical models are lc del Pezzo surfaces\}
\item $WES:=$\{$X \mid (X, \Delta_X)$ is a  weak lc del Pezzo pair for some effective $\Q$-divisor $\Delta$, where $X$ is smooth\}
\item $WNS:=$\{$X \mid (X, \Delta_X)$ is a   weak lc del Pezzo pair for some snc effective $\Q$-divisor $\Delta_X$,  where $X$ is smooth\}
\item $WEP:=$\{$X \mid f \colon X \rightarrow Y$ is a log resolution of  a weak lc del Pezzo pair $(Y,\Delta_Y)$ such that
$-(K_X + f_*^{-1}\Delta_Y)  + f^*(K_Y + \Delta_Y)$  is effective\}
\item $WNP:=$\{$X \mid f \colon X \rightarrow Y$ is the minimal resolution, where $(Y,\Delta_Y)$ is a weak lc del Pezzo pair for some snc effective $\Q$-divisor $\Delta_Y$\}
\end{enumerate}

\begin{theorem}\label{lc}
$LCD=WES=WNS=WEP=WNP$.
\end{theorem}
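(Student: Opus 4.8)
The plan is to prove Theorem~\ref{lc} by adapting the cycle of inclusions used for the klt case in Theorem~\ref{klt}, replacing ``klt'' with ``weak lc'' and ``lc del Pezzo surface'' throughout, and checking that each implication survives the weakening of the positivity and singularity hypotheses. I would establish the chain $LCD \subset WNS \subset WES$, $WNS \subset WNP$, $WNS \subset WEP$, and then close the loop with $WES \subset LCD$, $WNP \subset LCD$, and $WEP \subset LCD$. The easy inclusions are $WNS \subset WES$ (an snc boundary is in particular a boundary) and the observation that membership in any of these classes forces $X$ to be a big anticanonical surface, since in each case $-(K_X + \Delta_X)$ or $-(K_X + f_*^{-1}\Delta_Y) + f^*(K_Y+\Delta_Y)$ is big and effectivity of the discrepancy divisor transfers bigness of $-(K_Y+\Delta_Y)$ upstairs.

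The crucial and hardest step, exactly as flagged for the klt case, is the inclusion $LCD \subset WNS$, which must be carried out constructively. Given a big anticanonical surface $X$ whose anticanonical model $Y$ is an lc del Pezzo surface, I would write the Zariski decomposition $-K_X = P + N$ with $P$ nef and big and $N$ the negative part, and manufacture an effective snc $\Q$-divisor $\Delta$ supported on $\Null(P)$ so that $-(K_X+\Delta) = P - (\text{something supported on } \Null(P))$ remains nef and big while $(X,\Delta)$ is only \emph{weak} lc rather than klt. The relaxation to weak lc is precisely what accommodates the non-klt lc singularities of $Y$ that the klt argument could not handle: where the klt construction of Subsection~\ref{LDNS} chooses coefficients strictly less than one to keep discrepancies above $-1$, here I must allow coefficient-one components and discrepancies equal to $-1$, and verify that the resulting pair is log canonical by a discrepancy computation over the minimal resolution of $Y$. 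The main obstacle is ensuring simultaneously that $\Delta$ is snc, that $-(K_X+\Delta)$ is nef and big (so the pair is weak log del Pezzo, not merely log del Pezzo), and that no discrepancy drops below $-1$; this requires a careful choice of coefficients component-by-component along the chains of curves contracted to the lc singularities of $Y$, using the explicit structure of redundant blow-ups and the Sakai--B\u{a}descu description of the birational geometry.

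For the remaining inclusions I would argue as follows. For $WNS \subset WNP$ and $WNS \subset WEP$, given $(X,\Delta_X)$ weak lc del Pezzo with $X$ smooth, I pass to the anticanonical model $Y$, which is an lc del Pezzo surface by the weak-lc analogue of Theorem~\ref{main}; then $X \to Y$ factors through the minimal resolution, and one checks the minimal resolution carries a suitable snc boundary (placing $WNP$) and that $X$ itself is a log resolution with effective discrepancy divisor (placing $WEP$). For the closing inclusions $WES \subset LCD$, $WNP \subset LCD$, and $WEP \subset LCD$, the common mechanism is that the anticanonical model of $X$ coincides with the lc model of the pair, and a standard discrepancy comparison shows that the weak lc condition on the pair upstairs descends to the lc condition on the anticanonical model downstairs; effectivity of the discrepancy divisor in the $WEP$ case is what guarantees the anticanonical ring of $X$ agrees with that of $Y$, so the two models coincide. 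Throughout, I would lean on Theorem~\ref{main} as the bridge between bigness of the anticanonical divisor on the smooth side and the lc del Pezzo property of the anticanonical model, reserving the genuinely new work for the constructive $LCD \subset WNS$ step.
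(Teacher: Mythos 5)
Your cycle structure would close logically, but two of your key mechanisms fail. First, the ``common mechanism'' you invoke for the closing inclusions $WES\subset LCD$, $WNP\subset LCD$, $WEP\subset LCD$ --- that the anticanonical model of $X$ coincides with the ample model of $-(K_X+\Delta_X)$ --- is false in general; indeed the whole point of Theorems \ref{main} and \ref{klt} is that the two can differ. For any pair with $-(K_X+\Delta_X)$ ample but $-K_X$ big and non-nef (e.g.\ the pairs $\Delta=N+\epsilon L$ built in Subsection \ref{LDNS}), the model of the pair is $X$ itself while the anticanonical model is a strictly singular surface; likewise the anticanonical ring of $X$ need not agree with $R(-(K_Y+\Delta_Y))$. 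So your proofs of the closing inclusions do not go through as described. The correct argument for $WES\subset LCD$ (reused verbatim from $ES\subset LD$) is a negativity-lemma coefficient comparison along the anticanonical morphism: the divisor $A$ with $-A=-(K_X+\Delta_X)+f^{*}(K_Y+f_{*}\Delta_X)$ is effective, the resulting inequality $\Delta_X\geq N$ forces every coefficient of $N$ to be at most $1$ (else $(X,\Delta_X)$ is not lc), and then numerical lc-ness gives that $-K_Y$ is $\Q$-Cartier by \cite[p.112]{KM98}. The paper avoids $WEP\subset LCD$ and $WNP\subset LCD$ entirely: it proves $WEP\subset WNS$ directly by setting $\Delta:=f_{*}^{-1}\Delta_Y+D$, which is snc with coefficients at most $1$ and satisfies $-(K_X+\Delta)=f^{*}(-(K_Y+\Delta_Y))$ nef and big --- two lines, no model comparison needed.

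Second, you have the difficulty of $LCD\subset WNS$ inverted. Precisely because the target class is \emph{weak} lc, no perturbation or coefficient-tuning along chains is needed: take $\Delta:=N$. Then $-(K_X+N)=P=f^{*}(-K_Y)$ is nef and big, and $K_X+N=f^{*}K_Y$ is crepant over the lc pair $(Y,0)$, so $(X,N)$ is lc with coefficients at most $1$. The genuine content --- which your plan does not address and which cannot be arranged by any ``careful choice of coefficients,'' since it concerns the support rather than the coefficients --- is that $\Null(P)$, hence $N$, has snc support. This is Lemma \ref{snc}: by the classification of lc surface singularities one must rule out a contracted nodal cubic, which the paper does by showing such a curve would have to dominate the base of the ruling of a non-rational big anticanonical surface, contradicting \cite[Lemma 14.35]{B01}. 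Finally, a smaller point: $WNS\subset WNP$ and $WNS\subset WEP$ are trivial by taking $Y=X$ with $f$ the identity (exactly as in $NS\subset NP$); your detour through the anticanonical model does not even place $X$ in $WNP$, since $X$ need not be the minimal resolution of its anticanonical model (redundant blow-ups intervene), and for $WEP$ you would additionally need the union of the exceptional locus with the strict transform of the boundary to be snc, which you never verify.
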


Note that the analogous statement of Proposition \ref{newbd} also holds for the weak lc case (see Remark \ref{lc_prop}).

There exists a smooth projective surface whose anticanonical model is a non-klt lc del Pezzo surface (see \cite[Section 3]{Zha96} for the rational case). In contrast to the \emph{klt} case, there are non-rational lc del Pezzo surfaces, but it turns out that they are very special.

\begin{theorem}\label{non_rat_lc}
Let $Y$ be an lc del Pezzo surface, and let $f \colon X \rightarrow Y$ be the minimal resolution. If $X$ is not rational, then $X$ is a relatively minimal ruled surface with the ruling $\pi \colon X \rightarrow B$ to a smooth elliptic curve, and $f$ contracts a section $C$ that is a smooth elliptic curve. In particular, $Y$ is of Picard number one, and it has exactly one simple elliptic singularity.
\end{theorem}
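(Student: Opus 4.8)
The plan is to feed the minimal resolution through the Zariski decomposition of $-K_X$ and then exploit the numerical rigidity of ruled surfaces over a base of positive genus. Writing $K_X = f^*K_Y + \sum_i a_i E_i$ for the minimal resolution, minimality gives $a_i \le 0$ and the log canonicity of $Y$ gives $a_i \ge -1$, so $-K_X = P + N$ with $P := f^*(-K_Y)$ and $N := \sum_i (-a_i) E_i \ge 0$. Since $-K_Y$ is ample, $P$ is nef and big, $P \cdot E_i = 0$ for every exceptional curve, and $N$ is negative definite; thus this is the Zariski decomposition of $-K_X$ and $\Null(P) = \operatorname{Exc}(f)$. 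In particular $-K_X$ is big (equivalently $X \in LCD$ by Theorem \ref{lc}), so $\kappa(X) = -\infty$. As $X$ is not rational, the classification of surfaces produces a ruling $\pi \colon X \to B$ over a smooth curve $B$ with $g(B) = q(X) \ge 1$ and general fibre $F \cong \P^1$, $-K_X \cdot F = 2$.

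The engine of the argument is the identity $(-K_{X_0})^2 = 8(1 - g(B))$ on the relatively minimal model $\sigma \colon X \to X_0$ of the ruling, combined with the inequality $(\sigma_* P)^2 \ge P^2$, valid because $P$ is nef. I would first rule out that all exceptional curves of $f$ are vertical: if they were, $\sigma_* N$ would be a multiple $cF$ of a fibre, whence $P^2 \le (\sigma_* P)^2 = (-K_{X_0} - cF)^2 = 8(1 - g(B)) - 4c \le 0$, contradicting $P^2 = (-K_Y)^2 > 0$. Hence some exceptional curve $C$ dominates $B$. A rational curve cannot dominate a curve of positive genus, so $C$ is non-rational; by the classification of log canonical surface singularities the only non-rational exceptional curve of a minimal resolution is a smooth elliptic curve defining a simple elliptic singularity, for which $a_C = -1$, i.e. $C$ occurs in $N$ with coefficient $1$. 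The induced finite map $C \to B$ then forces, via Riemann--Hurwitz, $g(B) \le g(C) = 1$, so $B$ is elliptic.

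Next I would pin down $C$ numerically. Since $C \subset \operatorname{Supp}(N)$ with coefficient $1$ and $C \cdot F = \deg(C/B) =: m \ge 1$, we get $P \cdot F = 2 - N \cdot F \le 2 - m$; as the Hodge index theorem together with $P^2 > 0$ forces $P \cdot F > 0$, this yields $m = 1$, so $C$ is a section and $C \cong B$. The same inequality shows a second horizontal exceptional curve (necessarily non-rational, hence another simple elliptic curve of coefficient $1$) would give $N \cdot F \ge 2$ and violate $P \cdot F > 0$. Thus $C$ is the unique horizontal exceptional curve, $P \cdot F = 1$, and every other exceptional curve is vertical. It then remains to show there are no other exceptional curves and that the ruling has no reducible fibres, for this gives $\rho(X) = 2$, hence $\rho(Y) = \rho(X) - 1 = 1$ and exactly one (simple elliptic) singular point.

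I expect this last step to be the main obstacle. The difficulty is that $P$ is only a $\Q$-divisor, so the naive argument that $P \cdot F = 1$ splits as a sum of positive integers fails. Instead I would argue on each reducible fibre $F_0$ using Zariski's lemma together with the crucial constraint that the minimal resolution contracts only curves of self-intersection $\le -2$: every $(-1)$-curve in $F_0$ is therefore non-exceptional, so has positive intersection with $P$, and one shows this is incompatible with $P \cdot F_0 = 1$ and $C \cdot F_0 = 1$. Alternatively, one may invoke the structure theory of big anticanonical surfaces of Sakai and B\u{a}descu recalled in Section \ref{prelim}, which already describes the admissible fibre configurations. Once relative minimality is established, $\sigma$ is an isomorphism, $N = C$, and the statements on the Picard number and on the uniqueness of the simple elliptic singularity follow immediately.
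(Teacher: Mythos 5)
Your proposal is correct in substance, and in two places it takes a genuinely different route from the paper. The shared core is the Zariski decomposition $P=f^{*}(-K_Y)$, $N=\sum(-a_i)E_i$ together with the numerics $-K_X\cdot F=2$ and $P\cdot F>0$. But your proof that a horizontal exceptional curve \emph{exists} --- pushing $N$ to the relatively minimal model and comparing $(\sigma_{*}P)^2\geq P^2>0$ with $(-K_{X_0}-cF)^2=8(1-g(B))-4c\leq 0$ --- replaces the paper's appeal to Theorem \ref{ant} (if all singularities of $Y$ were rational, $X$ would be rational) combined with \cite[Lemma 14.35]{B01} (fiber components have $p_a=0$, so the curve over a non-rational singularity must be horizontal). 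Likewise, your \emph{uniqueness} of the horizontal curve and the section property ($C\cdot F=1$) via the coefficient-$1$ bound $N\cdot F\geq C\cdot F$ and the Hodge-index fact $P\cdot F>0$ is purely numerical, whereas the paper proves uniqueness cohomologically via B\u{a}descu's bound $h^1(\mathcal{O}_{C\cup C'})\leq h^1(\mathcal{O}_X)=1$ applied to two disjoint elliptic curves, and proves $C\cdot F=1$ only \emph{after} relative minimality, using $N=C$. Your substitutes are valid and more self-contained, at the cost of the auxiliary model $X_0$; one mild caution is to invoke $g(C)\geq g(B)$ for a finite morphism of smooth curves (or the paper's $p_a(C)\geq g(B)$ from dominance) rather than Riemann--Hurwitz, which requires separability in positive characteristic.

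The step you flag as the main obstacle does close, but not along the $(-1)$-curve line you sketch: the right object is the fiber component \emph{meeting $C$}, and the missing ingredient is a fact the lc classification gives you but which you never state --- every exceptional curve other than $C$ is disjoint from $C$ (the exceptional divisor over the simple elliptic point is the irreducible curve $C$, and exceptional loci over distinct points are disjoint). Granting this, suppose $F_0$ is a reducible fiber. Since $C\cdot F_0=1$, exactly one component $E$ meets $C$; it is vertical, hence $E\neq C$, hence non-exceptional by disjointness, so $P\cdot E>0$; and since $E\not\subset\Supp(N)$ while $C$ occurs in $N$ with coefficient $1$, also $N\cdot E\geq C\cdot E\geq 1$. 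Thus $-K_X\cdot E=P\cdot E+N\cdot E>1$, and adjunction (fiber components are smooth rational) gives $E^2=-K_X\cdot E-2>-1$, i.e.\ $E^2\geq 0$, contradicting Zariski's lemma, which forces $E^2<0$ in a reducible fiber. This is precisely the computation in the paper's own proof; your version, that every $(-1)$-curve in $F_0$ is non-exceptional and $P$-positive, only shows the component meeting $C$ is not a $(-1)$-curve and --- as you yourself observe, since $P\cdot E$ can be a small positive rational --- cannot by itself contradict $P\cdot F_0=1$. Once all fibers are irreducible there are no vertical curves of negative self-intersection, so $\operatorname{Exc}(f)=\{C\}$, $N=C$, $\rho(X)=2$, and the statements on $\rho(Y)=1$ and the unique simple elliptic singularity follow as you say.
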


We closely follow \cite[Proof of Theorem 2]{B83} for the proof of Theorem \ref{non_rat_lc}. Our contribution is to show that the minimal resolution of a non-rational lc del Pezzo surface is a relatively minimal ruled surface.

Del Pezzo surfaces with a non-rational singularity were studied in (\cite{F} and \cite{Sc}), and they were completely classified  in characteristic zero when the Picard number is one (\cite{Ch}). However, there seem to be too many cases for complete classification of rational lc del Pezzo surfaces. See \cite{KT} and \cite{Ko} for recent partial results.

Furthermore, we can classify non-rational weak lc del Pezzo pairs as a consequence of Theorem \ref{non_rat_lc}.

\begin{corollary}\label{non_rat_weak_lc}
Let $(Y, \Delta)$ be a weak lc del Pezzo pair. If $Y$ is not rational, then either of the following holds:
 \begin{enumerate}
 \item $Y$ is a weak lc del Pezzo surface containing exactly one simple elliptic singularity and finitely many rational double points of type $A_n$, or
 \item $Y$ contains at worst finitely many rational double points of type $A_n$.
 \end{enumerate}
Moreover, every minimal resolution of $Y$ can be obtained by a sequence of redundant blow-ups from the minimal resolution of a non-rational lc del Pezzo surface
\end{corollary}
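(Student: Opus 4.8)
The plan is to derive Corollary \ref{non_rat_weak_lc} from Theorem \ref{non_rat_lc} by analyzing how a weak lc del Pezzo pair structure on $Y$ interacts with the minimal resolution and the non-rationality hypothesis. First I would take the minimal resolution $f \colon X \to Y$ and observe that since $(Y,\Delta)$ is a weak lc del Pezzo pair with $Y$ non-rational, $X$ must itself be non-rational. I would then invoke Theorem \ref{lc} together with Theorem \ref{non_rat_lc} to control the geometry: the relevant anticanonical model $Y_0$ of $X$ is an lc del Pezzo surface (via $X \in WNP = LCD$), and if $Y_0$ is non-rational then by Theorem \ref{non_rat_lc} the surface $X$ is a relatively minimal ruled surface over a smooth elliptic curve $B$ and the contracted section is a smooth elliptic curve, producing exactly one simple elliptic singularity on $Y_0$.

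Next I would examine the singularities of $Y$ itself. The key point is that the exceptional locus of $f$ consists of curves contracted by the minimal resolution, and the non-rational part of this locus can only be the single elliptic section appearing in Theorem \ref{non_rat_lc}; all remaining exceptional curves are smooth rational curves. Because $X$ is a relatively minimal elliptic ruled surface, the negative curves available to be contracted are severely restricted: beyond the elliptic section, the only additional contractible configurations compatible with the ruling are chains of $(-2)$-curves lying in fibers, which yield precisely rational double points of type $A_n$. This dichotomy gives the two cases in the statement: case (1) when the elliptic section is contracted, producing one simple elliptic singularity together with finitely many $A_n$ singularities; and case (2) when it is not contracted, so $Y$ carries only finitely many $A_n$ singularities while non-rationality is inherited from the ruled structure upstairs.

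For the final assertion, I would use the decomposition of the birational morphism described in the introduction: the anticanonical map of a big anticanonical surface in the class $LCD$ factors as the minimal resolution of an lc del Pezzo surface followed by a sequence of redundant blow-ups (\cite{Sak84}, \cite{HP}). Applying this to the minimal resolution of $Y$ and tracking the non-rational locus, I would conclude that every minimal resolution of $Y$ arises by a sequence of redundant blow-ups from the minimal resolution of the non-rational lc del Pezzo surface $Y_0$, since redundant blow-ups occur along smooth points of rational curves and therefore neither create nor destroy the elliptic singularity.

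The main obstacle I expect is controlling precisely which fiber configurations on the elliptic ruled surface $X$ can be contracted while keeping $(Y,\Delta)$ weak lc del Pezzo, and in particular verifying that no non-$A_n$ rational singularities can appear. This requires a careful case analysis of the negative curves in fibers of $\pi \colon X \to B$ and a check that the discrepancy conditions forced by the weak lc del Pezzo pair structure, combined with the snc boundary available from Theorem \ref{lc}, restrict the contracted rational configurations to $A_n$-chains. I would handle this by combining the adjunction computation along each contracted curve with the constraint that $-(K_X + \Delta_X)$ be nef and big for the relevant boundary $\Delta_X$.
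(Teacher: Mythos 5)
There is a genuine gap in your dichotomy analysis: you conflate $X$, the minimal resolution of $Y$, with $X_0$, the minimal resolution of the lc del Pezzo surface $Y_0$. Theorem \ref{non_rat_lc} asserts relative minimality for $X_0$, not for $X$; in general $X$ is obtained from $X_0$ by a nontrivial sequence of redundant blow-ups $g \colon X \to X_0$ (Proposition \ref{red}). Your argument is in fact internally inconsistent: on a relatively minimal ruled surface every fiber is irreducible with self-intersection zero, so there are \emph{no} chains of $(-2)$-curves in fibers at all, and the configurations you propose as the source of the $A_n$ singularities could not exist if $X$ really were relatively minimal. The $(-2)$-chains actually arise as exceptional curves of the redundant blow-ups, and the classification you flag as your ``main obstacle'' is resolved in the paper not by adjunction-plus-nefness case analysis on fibers, but by the Zariski decomposition bookkeeping under redundant blow-ups: $P = g^{*}P_0$ and $N = g_{*}^{-1}C$ where $N_0 = C$ is the elliptic curve, so that every irreducible $g$-exceptional curve other than $N$ is either a $(-1)$-curve meeting $N$ or a $(-2)$-curve disjoint from $N$, with the $(-2)$-components forming chains (cf.\ \cite[Lemma 3.8]{HP}). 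Combined with Lemma \ref{adj} (every $f$-exceptional curve lies in $\Null(P)$, a fact you use implicitly but do not justify) and the fact that a minimal resolution contracts no $(-1)$-curves, this forces the rational configurations contracted by $f$ to be exactly $A_n$-chains, yielding the two cases.

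Two further points. First, case (1) of the statement asserts that $Y$ is a weak lc del Pezzo \emph{surface}, and you never verify this: when $f$ contracts the elliptic curve $N$ one must check that $-K_Y$ is nef, which the paper does via $-K_Y.D = f^{*}(-K_Y).f^{*}D = P.f^{*}D \geq 0$ for every curve $D$ on $Y$. Second, your placement of $X$ in the class $WNP$ is not available from the hypothesis, since $WNP$ requires an snc boundary on $Y$ while $\Delta$ is only assumed effective; the correct route (and the paper's, via the proof of Proposition \ref{newbd} and Remark \ref{lc_prop}) is to pull back: since $f$ is the minimal resolution, the negativity lemma gives $-(K_X + f_{*}^{-1}\Delta) + f^{*}(K_Y+\Delta) \geq 0$, so $(X, f_{*}^{-1}\Delta + \sum b_i E_i)$ is a weak lc del Pezzo pair and $X \in WES = LCD$. (Also, your parenthetical that redundant blow-ups occur at smooth points of rational curves is wrong at the first step: a redundant point requires $\mult_x(N) \geq 1$, and since $N_0 = C$, the first redundant center necessarily lies on the elliptic curve $C$; fortunately the ``moreover'' clause needs none of this, following directly from Proposition \ref{red} once $X \in LCD$ is known.)
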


Using the classification,  we characterize weak lc del Pezzo pairs whose Cox rings are finitely generated. In general, the Cox ring of every weak klt Fano pair is finitely generated in   characteristic zero (see \cite[Corollary 1.3.2]{BCHM}, and see also \cite[Corollary 2.15]{HK} for the surface case). However, not all Cox rings of weak lc del Pezzo pairs are finitely generated (e.g., elliptic ruled surfaces). By applying Corollary \ref{non_rat_weak_lc}, we obtain the following.

\begin{corollary}\label{cox}
Let $(Y, \Delta)$ be a weak lc del Pezzo pair. Then, the Cox ring of $Y$ is finitely generated if and only if either $Y$ is rational or $Y$ contains exactly one simple elliptic singularity.
\end{corollary}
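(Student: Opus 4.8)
The plan is to feed the trichotomy of Corollary \ref{non_rat_weak_lc} into three separate finite-generation arguments. If $Y$ is rational we are in the first clause of the statement; if $Y$ is not rational, then $Y$ falls into case $(1)$ (exactly one simple elliptic singularity, Picard number one, together with finitely many $A_n$-points) or case $(2)$ (at worst finitely many $A_n$-points and a surviving ruling). I expect these to match the conclusion as: rational $\Rightarrow$ finitely generated, case $(1)$ $\Rightarrow$ finitely generated, and case $(2)$ $\Rightarrow$ not finitely generated. The unifying theme is whether an elliptic ruling survives on $Y$, equivalently whether the grading group of divisors is finitely generated.

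For the rational case I would argue through the minimal resolution. By Theorem \ref{lc} the minimal resolution $X$ of $Y$ is a smooth rational surface with big anticanonical divisor; such a surface is a Mori dream space, so $\Cox(X)$ is finitely generated. Since $\Pic(X)$ is then finitely generated, so are $\Cl(Y)=\Pic(X)/\langle\text{exceptional classes}\rangle$ and $\Pic(Y)$, and the finite generation descends to $Y$ because $Y$ is obtained from $X$ by a birational contraction. This settles the first clause of the statement.

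For case $(1)$ the crucial observation is that, although the simple elliptic singularity makes $Y$ non-$\Q$-factorial and $\Cl(Y)$ infinite (it receives a copy of $\Pic^{0}(B)$ through the exceptional elliptic curve), the group $\Pic(Y)$ is still finitely generated. Concretely, by Theorem \ref{non_rat_lc} the resolution $f\colon X\to Y$ contracts an elliptic section $C$ of the ruled surface $\pi\colon X\to B$, and a class $aC+\pi^{*}M\in\Pic(X)$ descends to $Y$ exactly when its restriction to $C$ is trivial, which forces $M$ to be determined by $a$; hence $\Pic(Y)\cong\Z$. Contracting the section destroys the elliptic ruling and collapses the continuous part of the Picard group, so the Cox ring, graded by the finitely generated group $\Pic(Y)$, reduces to the section ring $\bigoplus_{n\ge 0}H^{0}(Y,-nK_{Y})$ of the ample divisor $-K_{Y}$ (ample because $Y$ has Picard number one), which is finitely generated.

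For case $(2)$ I would prove the opposite: the $A_n$-configurations sit inside fibers, so the ruling descends to a surjective morphism $\bar\pi\colon Y\to B$ onto the elliptic curve, and $\bar\pi^{*}$ embeds $\Pic^{0}(B)\cong B$ into $\Pic(Y)$. The pullbacks $\bar\pi^{*}[p]$ of distinct points $p\in B$ are rigid effective classes occupying infinitely many distinct degree-one classes, which cannot be generated by finitely many homogeneous elements, so the Cox ring is not finitely generated. Combining the three cases yields both directions. I expect the main obstacle to be case $(1)$: one must verify that the non-$\Q$-factorial simple elliptic singularity does not obstruct finite generation, isolating $\Pic(Y)$ as the correct finitely generated grading group and checking that contracting the elliptic section genuinely removes the continuous divisor-class data responsible for the failure in case $(2)$.
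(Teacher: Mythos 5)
Your overall architecture --- feeding the trichotomy of Corollary \ref{non_rat_weak_lc} into three separate finite-generation arguments --- is exactly the paper's, and two of your three cases are essentially right. For rational $Y$ the paper likewise invokes \cite[Theorem 1]{TVV11} (your descent of finite generation along the contraction $X \to Y$ is the one step you should make explicit, e.g.\ by pushing the Hu--Keel criterion down: $\overline{NE}(Y)$ is the image of $\overline{NE}(X)$, and nef divisors on $Y$ pull back to nef, hence semiample, divisors on $X$). In the surviving-ruling case the paper draws the same conclusion from the cleaner statement that $\Pic(Y)$ is not finitely generated, which is the rigorous form of your ``infinitely many degree-one classes'' argument.

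The genuine gap is in your elliptic-singularity case: you conflate $Y$ with its anticanonical model. Theorem \ref{non_rat_lc} gives Picard number one and ampleness of the anticanonical divisor only for the \emph{lc} del Pezzo surface $Y_0$; in case (1) of Corollary \ref{non_rat_weak_lc}, the surface $Y$ is merely a \emph{weak} lc del Pezzo surface whose minimal resolution arises from the relatively minimal elliptic ruled surface $X_0$ by redundant blow-ups, and such $Y$ in general has $\rho(Y) \geq 2$, additional $A_n$ points, and $-K_Y$ nef and big but \emph{not} ample. Concretely, every point of the contracted elliptic section $C \subset X_0$ is redundant (it appears in $N_0$ with coefficient $1$), so blow up one such point to get $X$ and contract only the strict transform of $C$: the resulting $Y$ has exactly one simple elliptic singularity, $\rho(Y)=2$, and $-K_Y \cdot f_{*}E = P\cdot E = 0$ for the exceptional $(-1)$-curve $E$. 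For this $Y$ your identification $\Pic(Y)\cong \Z$, and hence $\Cox(Y) = \bigoplus_{n \geq 0} H^0(\mathcal{O}_Y(-nK_Y))$, simply fails, and what is missing is precisely the substance of the paper's proof: (i) $q(Y)=0$, proved in Claim \ref{coxcl} via the Leray spectral sequence and Grothendieck's theorem on formal functions --- note that your descent computation for $\Pic$ only tests triviality of a line bundle on the reduced curve $C$ and ignores thickenings, although it is repairable here since $H^1(C, \mathcal{O}_C(-nC))=0$ for $n \geq 1$; (ii) semiampleness of $-K_Y$, which produces the morphism $Y \to Y_0$ used for the ($\Q$-)factoriality input to \cite[Proposition 2.9]{HK}; and (iii) verification of the Hu--Keel criterion, namely rational polyhedrality of $\overline{NE}(Y)$ via Kodaira's lemma and the cone theorem (\cite{Fu}, \cite{Ta}), and semiampleness of all nef divisors via abundance for log surfaces. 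Your instinct that $\Cl(Y)$ contains a copy of $\Pic^{0}(B)$, so that only the $\Pic$-grading can yield finite generation, is correct and matches the paper's implicit convention; but that observation does not substitute for (i)--(iii) once $\rho(Y) \geq 2$, which is exactly the obstacle you flagged but did not overcome.
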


Finally, as an application of Theorem \ref{klt}, we consider the following question of Schwede and Smith.

\begin{question}[Question 7.1 of \cite{SS}]
Let $Y$ be a projective variety of globally F-regular type. Does there exist an effective $\Q$-divisor $\Delta$ such that $(Y, \Delta)$ is a klt Fano pair?
\end{question}

For the definition and basic properties of globally F-regular varieties, we refer to \cite{Sm} and \cite{SS}. The question is affirmatively answered for $\mathbb{Q}$-factorial Mori dream spaces (\cite[Theorem 1.2]{GOST}). Here, using Theorem \ref{klt}, we answer the question for surfaces without any assumption.

\begin{theorem}\label{Freg}
Let $Y$ be a projective surface of globally F-regular type. Then, there exists an effective $\Q$-divisor $\Delta$ such that $(Y, \Delta)$ is a klt del Pezzo pair.
\end{theorem}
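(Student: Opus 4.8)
The plan is to deduce the statement from Theorem \ref{klt} by showing that the minimal resolution of $Y$ lies in the class $LD$, and then transporting the boundary down to $Y$. The only genuinely characteristic-$p$ input enters at the very start: from the structure theory of globally F-regular varieties one knows that a surface $Y$ of globally F-regular type is normal, has only klt singularities, and has big anticanonical divisor $-K_Y$ (the klt-ness is the standard correspondence between strong F-regularity and log terminal singularities, and the bigness of $-K_Y$ is part of the positivity theory of globally F-regular varieties; see \cite{SS} and \cite{GOST}). Since a klt surface singularity is a quotient singularity, $Y$ is moreover $\Q$-factorial, so $K_Y+\Delta$ will automatically be $\Q$-Cartier for any $\Q$-divisor $\Delta$. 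I would record these three facts first and treat everything afterwards as characteristic-free birational geometry.

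Next I would pass to the minimal resolution $f\colon X\to Y$. Writing $K_X=f^*K_Y+\sum_i d_iE_i$ with $-1<d_i\le 0$, one gets $-K_X=f^*(-K_Y)+\sum_i(-d_i)E_i$ with each $-d_i\ge 0$; hence $-K_X$ is big because $-K_Y$ is, i.e.\ $X$ is a big anticanonical surface. To identify its anticanonical model, note that $f_*\mathcal{O}_X$ of any effective $f$-exceptional divisor equals $\mathcal{O}_Y$, so the anticanonical ring of $X$ coincides with that of $Y$; thus $X$ and $Y$ share the same anticanonical model $Y''$. The induced contraction $h\colon Y\to Y''$ contracts only curves $C$ with $-K_Y\cdot C=0$, so it is crepant, whence $Y''$ is klt; and $-K_{Y''}$ is ample by construction. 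Therefore $Y''$ is a klt del Pezzo surface and $X\in LD$. By Theorem \ref{klt}, $X\in ES$, so there is an effective $\Q$-divisor $\Delta_X$ on $X$ with $(X,\Delta_X)$ a klt del Pezzo pair.

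It remains to produce the boundary on $Y$ itself, and this descent is where I expect the real work to lie. Setting $\Delta:=f_*\Delta_X$, the pair $(Y,\Delta)$ is klt, since klt-ness is preserved under pushing the boundary forward along a birational morphism from a klt pair. The subtle point is the ampleness of $-(K_Y+\Delta)$, which does not survive pushforward and for an arbitrary representative need only be nef and big. The way around this is to exploit, exactly as in the key step $LD\subset NS$, that the boundary is supported on $\Null$ of the positive part of the Zariski decomposition; equivalently, after pushing forward, $\Delta$ is supported on the $h$-exceptional locus of $h\colon Y\to Y''$. Because that locus is contracted, its components span a negative-definite lattice, so I can choose a sufficiently small effective $\Q$-divisor $\Delta$ supported there that is $h$-ample; since $-K_{Y''}$ is ample on the base, Nakai--Moishezon then gives that $-(K_Y+\Delta)$ is ample on $Y$, while the small coefficients keep $(Y,\Delta)$ klt. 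This yields a klt del Pezzo pair $(Y,\Delta)$.

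I expect the main obstacle to be precisely this transfer of positivity from the smooth model $X$ to the singular surface $Y$: one must either verify that the explicit divisor furnished by the proof of $LD\subset NS$ descends with $-(K_Y+\Delta)$ ample (equivalently, control the discrepancy divisor $F$ in $K_X+\Delta_X=f^*(K_Y+\Delta)+F$), or re-run the Zariski-decomposition construction directly on the $\Q$-factorial klt surface $Y$. Both routes hinge only on the bigness of $-K_Y$ supplied by global F-regularity together with the negative-definiteness of the contracted curves, and neither requires any further input from characteristic $p$.
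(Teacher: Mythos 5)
There is a genuine gap, and it sits exactly where you declare victory over characteristic $p$. Your identification of the anticanonical model $Y''$ as a klt del Pezzo surface rests on the claim that the contraction $h\colon Y\to Y''$ ``contracts only curves $C$ with $-K_Y\cdot C=0$, so it is crepant.'' That is false: writing $-K_Y=P+N$ for the Zariski decomposition, $h$ contracts precisely the curves with $P\cdot C=0$, and for such a curve $-K_Y\cdot C=N\cdot C$ is in general negative; one has $K_Y=h^{*}K_{Y''}-N$, so the discrepancies of $h$ are the negatives of the coefficients of $N$, and $h$ is crepant only when $N=0$, i.e., when $-K_Y$ was already nef. More seriously, no characteristic-free repair is possible: there exist smooth (hence klt) big anticanonical rational surfaces whose anticanonical models are non-klt lc del Pezzo surfaces (the examples of \cite[Section 3]{Zha96} cited in the paper; compare Remark \ref{no_lc}), so ``$Y$ klt and $-K_Y$ big'' simply does not imply $\lfloor N\rfloor=0$. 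This is exactly where the paper injects Frobenius a second time: by Lemma \ref{Fmor} (resting on \cite[Propositions 1.2 and 1.4]{HWY}) the minimal resolution $X$ and then its anticanonical model $Y_0$ are again of globally F-regular type, whence $Y_0$ is klt by Lemma \ref{Fklt}. Your closing sentence that nothing beyond the initial structure theory ``requires any further input from characteristic $p$'' is therefore precisely the missing idea.

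Two further points. First, the bigness of $-K_Y$ in characteristic zero is not an off-the-shelf citation: \cite[Theorem 1.1]{SS} is a positive-characteristic statement and \cite[Theorem 1.2]{GOST} assumes a $\Q$-factorial Mori dream space, whereas the paper proves this as Lemma \ref{Fbig} by a genuine reduction-mod-$p$ argument (pseudo-effectivity of $-K_Y$ via ample test curves on closed fibers, $h^1(\mathcal{O}_Y)=0$ and Castelnuovo to get rationality, then Sakai's structure theorems to exclude $\kappa(-K_Y)=0,1$). Second, the ``main obstacle'' you anticipate --- that ampleness does not survive pushforward --- is in fact a non-issue: the paper's Lemma \ref{image} shows, via the negativity lemma and Nakai--Moishezon, that for any contraction $f\colon X\to Y$ the pushforward $(Y,f_{*}\Delta_X)$ of a klt del Pezzo pair is again a klt del Pezzo pair, so $\Delta:=f_{*}\Delta_X$ works directly; your proposed re-run of the Zariski-decomposition construction on $Y$ is unnecessary, and in any case it would presuppose that the coefficients of $N$ are less than $1$ --- which is the very fact your argument fails to establish.
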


Schwede and Smith proved that every globally F-regular variety defined over a F-finite field of positive characteristic is of Fano type (\cite[Theorem 1.1]{SS}). However, the choice of the boundary divisor heavily relies on the characteristic of the base field.
On the other hand, our proof can be applied to the positive characteristic case. Here, we emphasize that our explicit choice of the boundary divisor is given by the Zariski decomposition (see Subsection \ref{LDNS} for our choice) so that our choice is independent of the characteristic of the base field.

The converse of Theorem \ref{Freg}  was already established in any dimension by \cite[Theorem 1.2]{SS}. Thus, in principle, Theorem \ref{klt} also gives a classification of projective surfaces of globally F-regular type. However, in positive characteristic, there is a del Pezzo surface which is not globally F-regular (see \cite[p.880]{SS}).


After completing this manuscript, the authors learned that Theorem \ref{Freg} was also obtained independently by Okawa (\cite{Okawa}) and Gongyo and Takagi (\cite{GT}) with different methods.

\section{Preliminaries}\label{prelim}

\subsection{Definitions of log del Pezzo pairs}
We call $(X, \Delta)$ a \emph{log surface pair} if $X$ is a normal projective surface and $\Delta$ is an effective $\Q$-divisor such that $K_X + \Delta$ is $\Q$-Cartier. Let $f \colon Z \rightarrow X$ be a log resolution of $(X, \Delta)$. We have
\begin{equation}\label{disc}
K_Z + f_{*}^{-1}\Delta \equiv f^{*}(K_X + \Delta) + \sum a_i E_i,
\end{equation}
where $E_i$ are exceptional divisors of $f$. Then, a pair $(X, \Delta)$ is called \emph{Kawamata log terminal} (\emph{klt} for short) if every discrepancy $a_i > -1$ and $\lfloor \Delta \rfloor =0$. If $(X, 0)$ is a klt pair, then we say that $X$ has klt singularities. Note that in the case of $\cha(k)=0$, a klt singularity is nothing but a quotient singularity by \cite[Theorem 9.6]{K}. We refer the reader to \cite{KM98} for a complete description of log pairs and their singularities.

\begin{definition}\label{def}\
(1) A klt pair $(X, \Delta)$ is called a \emph{klt del Pezzo pair} if $-(K_X + \Delta)$ is ample, and it is called   a \emph{weak klt del Pezzo pair} if $-(K_X + \Delta)$ is nef and big.\\
(2) A surface $X$ is called a \emph{klt del Pezzo surface} if the pair $(X,0)$ is a klt del Pezzo pair, and it is called a  \emph{weak klt del Pezzo surface} if the pair $(X,0)$ is a weak klt del Pezzo pair.
\end{definition}

A pair $(X, \Delta)$ is called \emph{log canonical} (\emph{lc} for short) if every discrepancy $a_i \geq -1$ of (\ref{disc}) and $b_i \leq 1$, where $\Delta = \sum b_i \Delta_i$ and each $\Delta_i$ is an irreducible component. We can similarly define a \emph{(weak) lc del Pezzo pair} and a \emph{(weak) lc del Pezzo surface} by replacing `klt' with `lc' in Definition \ref{def}.

We remark that every klt singularity is rational (\cite[Theorem 4.12]{KM98}), but not every lc singularity is rational (e.g., simple elliptic singularities).

\subsection{Geometry of big anticanonical surfaces}
A smooth projective surface $X$ is called a \emph{big anticanonical surface} if the anticanonical divisor $-K_X$ is big. Although not every anticanonical ring $R(-K_X):=\bigoplus_{m \geq 0} H^0 (\mathcal{O}_X(-mK_X)$ of a big anticanonical surface $X$ is finitely generated (see \cite[Lemma 14.39]{B01}), we can always define the anticanonical model of $X$ using the Zariski decomposition $-K_X = P+N$. The morphism $f \colon X \rightarrow Y$ given by $|mP|$ for some integer $m \gg 0$ is called the \emph{anticanonical morphism}, and $Y$ is called the \emph{anticanonical model} of $X$. Then, $Y$ is a normal projective surface (see \cite[14.32]{B01}). Note that if $Y$ is an lc del Pezzo surface, then $R(-K_X) \simeq R(-K_Y)$ is finitely generated and $Y \simeq \Proj R(-K_X)$, since $-K_Y$ is ample $\Q$-Cartier.

A point $x$ in a big anticanonical surface $X$ is called \emph{redundant} if $\mult_{x}(N) \geq 1$. The blow-up $\pi \colon \widetilde{X} \rightarrow X$ at a redundant point $x$ is called the \emph{redundant blow-up}, and the exceptional divisor is called the \emph{redundant curve} (see \cite[Definition 4.1]{Sak84} and \cite[Section 2]{HP}).

\begin{proposition}[Proposition 4.2 of \cite{Sak84} and Proposition 4 of \cite{B83}]\label{red}
Let $X$ be a big anticanonical surface. Then, there is a sequence of redundant blow-ups $g \colon X \rightarrow X_0$, where $X_0$ is the minimal resolution of the anticanonical model of $X$.
\end{proposition}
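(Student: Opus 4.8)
The plan is to induct on $\rho(X) - \rho(X_0)$, peeling off one blow-up at a time from the top and verifying at each stage that the contracted curve is a redundant curve. Write the Zariski decomposition $-K_X = P + N$ and let $f \colon X \to Y$ be the anticanonical morphism, so that $P = f^{*}A$ for an ample $\Q$-Cartier divisor $A$ on $Y$ (this is the polarization defining $Y = \Proj R(-K_X)$; see \cite[14.32]{B01}). Since $X_0$ is the minimal resolution of $Y$ and $X$ is smooth, $f$ factors as $X \xrightarrow{g} X_0 \xrightarrow{f_0} Y$, and $g$ is a composition of blow-ups of smooth points. The base case $X = X_0$ is trivial (empty sequence).

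For the inductive step, assume $X \neq X_0$. Then $g$ is a nontrivial composition of blow-ups, so its exceptional locus contains a $(-1)$-curve $E$; let $\pi \colon X \to X'$ be the contraction of $E$, with center $x \in X'$, and let $g' \colon X' \to X_0$ be the induced morphism, $f' := f_0 \circ g' \colon X' \to Y$. First I would record that $X'$ is again a big anticanonical surface whose anticanonical model is $Y$: indeed $-K_{X'} = \pi_{*}(-K_X)$ is big, and setting $P' := f'^{*}A$, one checks by the uniqueness of Zariski decomposition that $-K_{X'} = P' + N'$ is the Zariski decomposition. Here $P'$ is nef and big, $N' = \pi_{*}N \geq 0$ is supported on the $f'$-exceptional (hence negative definite) locus, and $P' \cdot N' = f'^{*}A \cdot \pi_{*}N = A \cdot f_{*}N = P \cdot N = 0$. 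In particular the inductive hypothesis applies to $g' \colon X' \to X_0$.

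The key step is to show that $x$ is redundant on $X'$. Using $K_X = \pi^{*}K_{X'} + E$ together with $P = f^{*}A = \pi^{*}(f'^{*}A) = \pi^{*}P'$, I would compute
$$ N = -K_X - P = \pi^{*}(-K_{X'}) - E - \pi^{*}P' = \pi^{*}N' - E. $$
Because $N$ is effective, $\pi^{*}N' - E \geq 0$; since the coefficient of $E$ in $\pi^{*}N'$ is exactly $\mult_{x}(N')$, this forces $\mult_{x}(N') \geq 1$, so $x$ is redundant and $\pi$ is a redundant blow-up. Applying the inductive hypothesis to $g' \colon X' \to X_0$ expresses it as a sequence of redundant blow-ups, and prepending $\pi$ completes the induction.

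The main obstacle I anticipate is the bookkeeping that makes the positive parts literally compatible under the contraction, namely the identities $P = \pi^{*}P'$ and the fact that $P'$ (resp.\ $P_0 = f_0^{*}A$) is genuinely the positive part of the Zariski decomposition on $X'$ (resp.\ $X_0$), rather than merely a nef class pulling back $A$. This rests on the uniqueness of the Zariski decomposition and on the characterization of the positive part as the pullback of the ample polarization of the anticanonical model. Once these are in place, the effectivity of $N$ does all the remaining work, and the crucial multiplicity bound $\mult_{x}(N') \geq 1$ follows immediately from $\pi^{*}N' - E = N \geq 0$.
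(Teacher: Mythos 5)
Your proof is correct, and it is essentially the argument of Sakai and B\u{a}descu themselves: the paper does not reprove Proposition \ref{red} but cites \cite[Proposition 4.2]{Sak84} and \cite[Proposition 4]{B83}, whose proofs run exactly as yours does, by factoring the anticanonical morphism through the minimal resolution and peeling off $(-1)$-curves one at a time. The bookkeeping identities you single out as the crux, $P = \pi^{*}P'$ and $N = \pi^{*}N' - E$ (forcing $\mult_x(N') \geq 1$ from $N \geq 0$), are precisely \cite[Corollary 6.7]{Sak84} and \cite[Lemma 3.2]{HP}, which this paper itself invokes in the proof of Corollary \ref{non_rat_weak_lc}.
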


\begin{theorem}[Theorem 4.3 of \cite{Sak84}]\label{ant}
Let $X$ be a big anticanonical rational surface. Then, the anticanonical model of $X$ is a del Pezzo surface with rational singularities. Conversely, the minimal resolution of a del Pezzo surface with rational singularities is a big anticanonical rational surface.
\end{theorem}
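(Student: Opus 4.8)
The plan is to use the Zariski decomposition $-K_X = P + N$ as the bridge between $X$ and its anticanonical model $Y$, treating the two implications separately but symmetrically. In both directions the crucial point is that the positive part $P$ is exactly the pullback of $-K_Y$, while the negative part $N$ is absorbed into the discrepancies of the contraction; the ampleness of $-K_Y$ then feeds a cohomological computation that controls the singularities.

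For the forward implication, I would first recall that $P$ is nef and big, so $\Null(P) = \{C : P \cdot C = 0\}$ is a negative-definite configuration of curves. Contracting it by Artin's theorem produces a birational morphism $f \colon X \to Y$ onto a normal projective surface, and by the universal property $P$ descends to an ample $\Q$-Cartier class on $Y$. Since $N$ is $f$-exceptional, $f_* N = 0$, so $-K_Y = f_*(-K_X) = f_* P$ is precisely this ample class; hence $Y$ is a del Pezzo surface. To obtain rational singularities I would compute $\chi(\mathcal{O}_Y)$. Because $X$ is rational, $h^1(\mathcal{O}_X) = h^2(\mathcal{O}_X) = 0$, and the Leray sequence for $f_* \mathcal{O}_X = \mathcal{O}_Y$ gives $h^1(\mathcal{O}_Y) \le h^1(\mathcal{O}_X) = 0$. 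Moreover $h^2(\mathcal{O}_Y) = h^0(\omega_Y) = 0$, since an ample $-K_Y$ admits no effective divisor in $|K_Y|$. Thus $\chi(\mathcal{O}_Y) = 1 = \chi(\mathcal{O}_X)$, and comparing with $\chi(\mathcal{O}_X) = \chi(\mathcal{O}_Y) - \operatorname{length}(R^1 f_* \mathcal{O}_X)$ forces $R^1 f_* \mathcal{O}_X = 0$, i.e. $Y$ has rational singularities.

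For the converse, let $f \colon X \to Y$ be the minimal resolution of a del Pezzo surface $Y$ with rational singularities. Writing $K_X = f^*K_Y + \sum a_i E_i$, minimality gives $a_i \le 0$, so $-K_X = f^*(-K_Y) + N$ with $N := \sum (-a_i) E_i \ge 0$. Since $f^*(-K_Y)$ is nef and big, $N$ is $f$-exceptional (hence negative definite), and $f^*(-K_Y) \cdot E_i = -K_Y \cdot f_* E_i = 0$, so uniqueness of the Zariski decomposition identifies $P = f^*(-K_Y)$; in particular $\operatorname{vol}(-K_X) = (-K_Y)^2 > 0$, and $X$ is a big anticanonical surface. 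It remains to show $X$ is rational, which I would do by Castelnuovo's criterion ($q = P_2 = 0$). The vanishing $P_2 = 0$ (indeed $h^0(mK_X) = 0$ for all $m \ge 1$) follows from $\mathcal{O}_X(mK_X) \hookrightarrow f^*\mathcal{O}_Y(mK_Y)$ together with $H^0(Y, \mathcal{O}_Y(mK_Y)) = 0$, again because $-K_Y$ is ample.

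The main obstacle is establishing $q(X) = 0$ in the converse. Since $-K_X$ is big we have $\kappa(X) = -\infty$, so by the classification of surfaces $X$ is either rational or irrationally ruled, and the task is to exclude the latter. If $X$ were ruled over a curve $B$ with $g(B) \ge 1$, then $q(X) = g(B) \ge 1$, and I would derive a contradiction by passing to a relatively minimal ruled model $\bar X \to B$ and checking that $\operatorname{vol}(-K_{\bar X}) = 0$ there—a direct computation in the N\'eron--Severi group of a geometrically ruled surface—whence $\operatorname{vol}(-K_X) \le \operatorname{vol}(-K_{\bar X}) = 0$, contradicting bigness. This ruled-surface volume computation is the one genuinely delicate step; the forward rationality-of-singularities argument, while also cohomological, is clean once $-K_Y$ is known to be ample.
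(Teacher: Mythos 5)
The paper offers no proof of this statement at all: it is imported verbatim as Theorem 4.3 of \cite{Sak84}, so your attempt can only be compared with Sakai's argument and judged on its own merits. Your forward direction is essentially Sakai's and is basically sound, with one caveat: negative definiteness of $\Null(P)$ alone does \emph{not} give a contraction in the projective category (in general one only gets a normal algebraic space; Artin's contractibility theorem requires the rationality condition $p_a(Z)\leq 0$ on cycles, which is what you are trying to prove). You should instead take the anticanonical morphism as given by semiampleness of $P$, i.e.\ by $|mP|$ as in \cite[14.32]{B01}; then $mP=f^{*}A$ for an ample Cartier divisor $A$, so $-K_Y=f_{*}P$ is ample $\Q$-Cartier for free, and your Leray injection $H^1(\mathcal{O}_Y)\hookrightarrow H^1(\mathcal{O}_X)=0$, the duality argument $h^2(\mathcal{O}_Y)=h^0(\omega_Y)=0$, and the Euler characteristic comparison forcing $R^1f_{*}\mathcal{O}_X=0$ are all correct.

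The converse, however, has a genuine error at exactly the step you flag as delicate. It is false that a relatively minimal ruled surface over a curve of genus $\geq 1$ has $\mathrm{vol}(-K)=0$. Take $B$ elliptic and $\bar{X}=\mathbb{P}(\mathcal{O}_B\oplus\mathcal{L})$ with $\deg\mathcal{L}=-e<0$, so that $C_0^2=-e$ and $-K_{\bar{X}}\equiv 2C_0+eF$. The Zariski decomposition is $P=C_0+eF$ and $N=C_0$, with $P$ nef and $P^2=e>0$; hence $-K_{\bar{X}}$ is big of volume $e$. Indeed $\bar{X}$ is the minimal resolution of the cone over an elliptic normal curve of degree $e$, a del Pezzo surface with a simple elliptic (non-rational) singularity --- precisely the surfaces occurring in Lemma \ref{snc} and Theorem \ref{non_rat_lc} of this paper. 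The telltale sign is that your converse argument never uses the hypothesis that the singularities of $Y$ are rational (minimality only gives $a_i\leq 0$, which holds for any normal $Y$), so if your argument were correct it would prove that the minimal resolution of the cone over a smooth plane cubic is rational, which it is not. The rationality hypothesis must enter as follows: since $Y$ has rational singularities, every cycle supported on the exceptional locus has $p_a\leq 0$, so every $f$-exceptional curve is rational and in particular cannot dominate an irrational base $B$; moreover $R^1f_{*}\mathcal{O}_X=0$ gives $h^1(\mathcal{O}_X)=h^1(\mathcal{O}_Y)$, and one then kills $h^1(\mathcal{O}_Y)$ by a vanishing theorem for the ample $-K_Y$ on the rationally singular $Y$ --- this is where Sakai's proof does its real work, and where the characteristic of the base field must be handled with care, as the Remark following the theorem in the paper warns. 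Only after that does Castelnuovo's criterion $q=P_2=0$ (your $P_2=0$ argument is fine) yield rationality of $X$.
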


\begin{remark}
The analogous statement of Theorem \ref{ant} holds for a non-rational ruled surface under the assumption that $\cha(k)=0$ (see \cite[Theorem 2]{B83} and \cite[Remark in p.237]{B01}). Since we are working in arbitrary characteristic, we should be careful in dealing with the anticanonical model of a non-rational surface.
\end{remark}

For any divisor $D$ on an algebraic surface $X$, we define
$$\Null(D):= \{ x \in X \mid \text{$x$ is a point on an integral curve $C$ such that $C.D=0$}\}.$$
The following lemma will play a crucial role.

\begin{lemma}\label{snc}
Let $X$ be a big anticanonical surface with the Zariski decomposition $-K_X = P+N$, and let $f \colon X \rightarrow Y$ be the anticanonical morphism. If $Y$ has at worst rational singularities or lc singularities, then $\Null(P)$ has an snc support. In particular, the anticanonical morphism $f$ can be regarded as a log resolution of $(Y, 0)$.
\end{lemma}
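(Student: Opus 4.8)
The plan is to prove that $\Null(P)$ has simple normal crossing support, meaning its irreducible components are smooth curves meeting transversally, and that no three components pass through a common point. Since $X$ is smooth, transversality and the avoidance of triple intersections are the two conditions I must establish for the curves $C$ with $C.P = 0$.

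First I would use Proposition \ref{red} to factor the anticanonical morphism $f \colon X \to Y$ as a sequence of redundant blow-ups $g \colon X \to X_0$ followed by the minimal resolution $h \colon X_0 \to Y$, where $X_0$ is the minimal resolution of the anticanonical model. The key observation is that the curves in $\Null(P)$ are exactly the curves contracted by $f$, together with the redundant curves introduced by $g$. Indeed, $C.P = 0$ forces $C$ to be contracted by the morphism defined by $|mP|$. I would first analyze the curves contracted by the minimal resolution $h$: these form the exceptional locus over the singularities of $Y$. Under the hypothesis that $Y$ has only rational or lc singularities, the dual graph of each exceptional configuration is well understood—for rational singularities the exceptional curves are smooth rational curves meeting transversally with an snc (in fact tree- or cycle-shaped) dual graph, and the non-rational lc case is governed by the classification of elliptic singularities (cusps, simple elliptic, etc.), whose minimal resolutions again have snc exceptional configurations. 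So the exceptional locus of $h$ already has snc support.

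Next I would track how the redundant blow-ups $g$ affect the picture. A redundant blow-up is a blow-up at a point $x$ with $\mult_x(N) \geq 1$; I would invoke the explicit local structure of redundant blow-ups from \cite[Sections 2 and 3]{HP} to control how the negative part $N$, and hence $\Null(P)$, behaves. The point is that a redundant blow-up adds a single smooth rational exceptional curve, and one must check that after pulling back, the total transform of the previously snc configuration together with the new exceptional curve remains snc—in particular that the center $x$ is chosen so that at most one component of the existing $\Null(P)$ passes through it (or that blowing up resolves any would-be triple point or tangency rather than creating one). This is where the detailed analysis of redundant points in \cite{HP} does the work.

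I expect the main obstacle to be the non-rational lc case, where $Y$ may carry a non-rational (e.g. simple elliptic or cusp) singularity. Here the exceptional fiber is no longer a tree of rational curves, so I cannot simply cite the rational-singularities dictionary; instead I must appeal to the classification of two-dimensional lc singularities and verify snc-ness of their minimal resolutions directly, and then confirm that redundant blow-ups over such a configuration still preserve snc. The final clause—that $f$ may be viewed as a log resolution of $(Y,0)$—then follows formally: $f$ is a proper birational morphism from the smooth surface $X$ whose exceptional locus is $\Null(P)$, and having just shown this locus is snc, the definition of a log resolution is met.
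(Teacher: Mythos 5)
Your reduction to the minimal resolution via Proposition \ref{red} matches the paper, and your handling of the rational case and of the redundant blow-ups is fine (indeed simpler than you fear: once all components of $\Null(P)$ on the minimal resolution are smooth and the configuration is snc, the total transform under \emph{any} point blow-up is automatically snc, so no case analysis on the center is needed). But there is a genuine gap at exactly the step you flag as the main obstacle. You propose to ``appeal to the classification of two-dimensional lc singularities and verify snc-ness of their minimal resolutions directly,'' and earlier you assert that the elliptic/cusp cases have ``snc exceptional configurations.'' This is false as a statement about abstract lc singularities: by the classification (\cite[Theorem 4.7]{KM98}), a cusp singularity may have as exceptional divisor of its minimal resolution a single \emph{irreducible rational curve with a node} --- a curve that is not even smooth, hence not snc. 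No local verification can rule this out, so your plan, as stated, cannot close.

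The actual content of the lemma is a \emph{global} argument excluding precisely this nodal curve, and it is the part your proposal never supplies. The paper argues: if a nodal rational curve $C$ with $C.P=0$ were contracted, then $f(C)$ would be a non-rational singularity (cusps are not rational), so by Theorem \ref{ant} the surface $X$ cannot be rational and is therefore a ruled surface $\pi \colon X \rightarrow B$ over a curve $B$ of genus at least $1$; by \cite[Lemma 14.35]{B01} every curve in a fiber of $\pi$ has arithmetic genus $0$, so $C$ (with $p_a(C)=1$) cannot lie in a fiber and must dominate $B$ --- impossible, since the normalization of $C$ is $\mathbb{P}^1$ and a rational curve cannot dominate a curve of positive genus. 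To repair your proof you must replace the proposed ``direct verification'' with this (or an equivalent) global argument exploiting that $X$ is a big anticanonical surface.
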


\begin{proof}
By Proposition \ref{red}, we may assume that $f \colon X \rightarrow Y$ is the minimal resolution. If $Y$ has at worst rational singularities, then the assertion is trivial. Thus, assume that $Y$ has at worst lc singularities. By the classification of surface lc singularities (see \cite[Theorem 4.7]{KM98}), we only have to show that a nodal cubic curve $C$ cannot be contracted by $f$.

Suppose that there is a nodal cubic curve $C$ such that $C.P=0$. Then, $f(C)$ is not a rational singularity, and hence, $X$ is a non-rational ruled surface by Theorem \ref{ant}. Let $\pi \colon X \rightarrow B$ be the ruling to a smooth curve $B$ with genus at least $1$. By \cite[Lemma 14.35]{B01}, every curve $D$ in a fiber of $\pi$ satisfies $p_a(D)=0$, and hence, the nodal cubic curve $C$ cannot be contained in any fibre of $\pi$. Thus, $\pi|_C \colon C \rightarrow B$ is a dominant morphism, and hence, we get a contradiction.
\end{proof}

By the construction of the anticanonical morphism, $\Null(P)$ consists of $f$-exceptional divisors. We have
$$-K_X = f^{*}(-K_Y) - \sum_{E_i \in \Null(P)} a_i E_i.$$
By \cite[Proposition 14.26]{B01}, we obtain the Zariski decomposition $-K_X=P+N$, which is given by $P=f^{*}(-K_Y)$ and $N= - \sum a_i E_i$. If $(Y, 0)$ is a klt (resp. lc) pair, then $0 \leq -a_i < 1$ (resp. $0 \leq -a_i \leq 1$) for every $i$.

\section{KLT del Pezzo pairs}\label{klt_s}
In this section, we prove Theorem \ref{klt} by showing   the following inclusions.
\begin{itemize}
\item $NS \subset ES \subset  LD \subset  NS$
\item $NS \subset  NP \subset  EP \subset  LD$
\end{itemize}

\subsection{$NS \subset ES$}
It is trivial.

\subsection{$ES \subset LD$}\label{ESLD}
This proof is inspired by \cite[Section 3]{TVV11}. Let $(X, \Delta_X)$ be a klt del Pezzo pair, where $X$ is smooth. Then, $X$ is a big anticanonical surface. Let $f \colon X \rightarrow Y$ be the anticanonical morphism. We can define the pull-back of a $\Q$-Weil divisor by using Mumford's intersection theory (see \cite{Mumford}). Note that the $\mathbb{Q}$-divisor
$$-A:= - (K_X + \Delta_X) - f^{*}(-(K_Y + f_{*}\Delta_X)),$$
supported on the exceptional locus of $f$, is $f$-nef. By the negativity lemma (see \cite[Lemma 7.1]{Zar62} or  \cite[Lemma 3.39]{KM98}), $A$ is effective. We have
$$-K_X = f^{*}(-K_{Y}) - \sum a_i E_i,$$
where each $E_i$ is an irreducible component of the whole exceptional divisor. Let $-K_X = P+N$ be the Zariski decomposition. Then, $P=f^{*}(-K_{Y})$ and $N=-\sum a_iE_i$, and hence, we obtain
$$ -A = N - \Delta_X + f^{*} f_{*}\Delta_X.$$

Suppose, to derive a contradiction, that $Y$ is not a klt del Pezzo surface. Then, we have one of the following two cases:
\begin{enumerate}
    \item $-K_Y$ is not $\Q$-Cartier, or
    \item there is a component of $N$ with coefficient at least $1$.
\end{enumerate}
If we assume Case (2), then the divisor
$$\Delta_X =  A  + N   + f^{*} f_{*}\Delta_X$$
contains a component with coefficient at least $1$, since the divisors $A$ and $f^{*} f_{*}\Delta_X$ are effective. Thus, the pair $(X, \Delta_X)$ is not klt, a contradiction. Hence, $-K_Y$ is not $\Q$-Cartier and every component of $N$ has a coefficient greater than $-1$. Then, the pair $(Y, 0)$ is numerically lc, and hence, by \cite[p.112]{KM98}, $-K_Y$ is  $\Q$-Cartier, a contradiction.

\begin{remark}\label{no_lc}
Let $X$ be a big anticanonical surface and let $Y$ be its anticanonical model.
If $Y$ contains a non-klt singularity, then $(X, \Delta)$ is not a lc del Pezzo pair for any effective $\mathbb{Q}$-divisor $\Delta$. Indeed, if $(X, \Delta)$ is an lc del Pezzo pair for some effective $\mathbb{Q}$-divisor $\Delta$, then  $(X, (1-\epsilon ) \Delta)$ is a klt del Pezzo pair for some sufficiently small number $\epsilon >0$; thus, $Y$ is a klt del Pezzo surface by Subsection \ref{ESLD}.
\end{remark}

\subsection{$LD \subset NS$}\label{LDNS}

Let $X$ be a smooth projective surface whose anticanonical model $Y$ is a klt del Pezzo surface. Then, $Y$ contains at worst rational singularities, and hence, $X$ is a big anticanonical rational surface by Theorem \ref{ant}. Let $-K_X = P+N$ be the Zariski decompositon, and let $f \colon X \rightarrow Y$ be the anticanonical morphism. Recall that $P$ is nef and big, and $\lfloor N \rfloor = 0$. By Lemma \ref{snc}, $\Null(P)$ has an snc support. Since $N$ is supported on $\Null(P)$, it has an snc support.  If $P$ is ample, the proof is completed by letting $\Delta=N$. Thus, we now assume that $P$ is nef and big, but not ample.

\begin{claim}\label{cl1}
There is an effective $\Q$-divisor $L$ supported on $\Null(P)$ such that $L.E_i <0$ for every $E_i \in \Null(P)$.
\end{claim}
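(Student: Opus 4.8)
The plan is to reduce the claim to a statement in linear algebra about the intersection matrix of the contracted curves. Write $E_1, \dots, E_n$ for the irreducible components of $\Null(P)$; these are precisely the $f$-exceptional curves, where $f \colon X \to Y$ is the anticanonical morphism. First I would record the Gram matrix $M := (E_i \cdot E_j)_{1 \le i,j \le n}$ and isolate two structural facts. Since the $E_i$ are contracted by the birational morphism $f$ of normal surfaces, $M$ is negative definite (by the negativity lemma already invoked in Subsection \ref{ESLD}); in particular each diagonal entry $E_i^2$ is negative. Since distinct curves meet properly, the off-diagonal entries satisfy $E_i \cdot E_j \ge 0$. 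Finding $L = \sum_i c_i E_i$ as in the claim then amounts to producing a vector $\mathbf{c} \ge 0$ with rational entries such that $M \mathbf{c} < 0$ entrywise.

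Next I would pass to $-M$, which by the two observations above is a symmetric positive definite matrix all of whose off-diagonal entries are nonpositive, i.e. a Stieltjes matrix. The key input is the entrywise nonnegativity of its inverse, $(-M)^{-1} \ge 0$. Granting this, I would simply set
\[
\mathbf{c} := (-M)^{-1}\mathbf{1}, \qquad \mathbf{1} = (1, \dots, 1)^{T},
\]
so that $\mathbf{c} \ge 0$ has rational entries and $M\mathbf{c} = -\mathbf{1}$. Then $L := \sum_i c_i E_i$ is an effective $\Q$-divisor supported on $\Null(P)$ satisfying $L \cdot E_j = -1 < 0$ for every $j$, which is exactly the claim. (If $\Null(P)$ is disconnected, $M$ is block diagonal and the same argument applies block by block.)

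The only real content, and where I expect to spend care, is the nonnegativity of $(-M)^{-1}$. Rather than cite M-matrix theory as a black box, I would give the short self-contained argument: write $-M = D - B$, where $D$ is the positive diagonal part and $B \ge 0$ is the nonnegative off-diagonal part. Congruence by $D^{-1/2}$ preserves positive definiteness, so $I - D^{-1/2} B D^{-1/2}$ is positive definite; since $D^{-1/2} B D^{-1/2}$ is symmetric and nonnegative, its spectral radius equals its largest eigenvalue, which is therefore strictly less than $1$. As $D^{-1}B$ is similar to $D^{-1/2} B D^{-1/2}$, we get $\rho(D^{-1}B) < 1$, and hence the Neumann series converges, giving
\[
(-M)^{-1} = \Big( \sum_{k \ge 0} (D^{-1}B)^{k} \Big) D^{-1} \ge 0
\]
as a sum of nonnegative matrices. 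The subtle step is the spectral radius bound, but it follows cleanly from the invariance of positive definiteness under congruence together with the fact that a symmetric nonnegative matrix attains its spectral radius as a genuine eigenvalue.
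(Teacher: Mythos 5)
Your proposal is correct, and its first half coincides with the paper's own proof of Claim \ref{cl1}: both arguments produce $L$ by solving the linear system $L.E_i=-1$ for all $E_i \in \Null(P)$, which is possible exactly because the intersection matrix $M$ of $\Null(P)$ is negative definite. (One attribution quibble: that definiteness comes from Mumford's theorem on curves contracted by a birational morphism of normal surfaces, or from the Hodge index theorem applied to the nef and big divisor $P$; the negativity lemma invoked in Subsection \ref{ESLD} is the effectivity statement, not the definiteness statement.) Where you genuinely diverge is in proving that the solution is effective. The paper dispatches this in one line by the negativity lemma: $-L$ is $f$-nef and supported on the $f$-exceptional locus, hence $L$ is effective. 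You instead re-prove precisely the needed special case of that lemma by pure linear algebra, showing $(-M)^{-1}$ is entrywise nonnegative via the Stieltjes decomposition $-M = D - B$, congruence-invariance of positive definiteness, the Perron--Frobenius fact that a nonnegative symmetric matrix attains its spectral radius as an eigenvalue, and the Neumann series; each step checks out, including $\rho(D^{-1}B)=\rho(D^{-1/2}BD^{-1/2})<1$, and rationality of the coefficients is automatic since $M$ has integer entries. The trade-off: the paper's route is shorter and leans on a standard geometric tool, while yours is self-contained and proves the slightly stronger, purely combinatorial statement that \emph{any} symmetric negative definite integer matrix with nonnegative off-diagonal entries has an entrywise nonnegative negated inverse --- so it applies to any such configuration of curves without presupposing a contraction morphism. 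Note also that the paper gives a second, genuinely different proof in Remark \ref{remark} (via augmented base loci and the cone theorem), which differs from both your argument and the paper's main proof.
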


The assertion $LD \subset NS$ follows from Claim \ref{cl1}. Indeed, let $$\Delta := N + \epsilon L$$ for sufficiently small $\epsilon >0$. Then, $\Delta$ is snc, and the pair $(X, \Delta)$ is klt. It is sufficient to show that $-(K_X + \Delta)=P - \epsilon L$ is ample. By \cite[Proposition 3.3]{Nak07} or \cite[Proposition 2]{TVV11}, the Kleiman-Mori cone $\overline{NE}(X)$ is rational polyhedral. Let $C_1, \ldots, C_n$ be irreducible curves generating $\overline{NE}(X)$, and suppose that $C_1.P=\cdots=C_k.P=0$ and  $C_{k+1}.P, \ldots, C_n.P > 0.$ Then, for every $i$ with $1 \leq i \leq k$, by Claim \ref{cl1}, $L.C_i<0$, so $(P-\epsilon L).C_i >0$. For all $j$ with $k+1 \leq j \leq n$, we may assume that $P.C_j > \epsilon L.C_j$ possibly by choosing smaller $\epsilon$, and hence, we obtain $(P-\epsilon L).C_j >0$. Thus, $P - \epsilon L$ is ample.

\begin{proof}[Proof of Claim \ref{cl1}]
Since the intersection matrix of $\Null(P)$ is negative definite, there exists a $\Q$-divisor $L := \sum_{E_i \in \Null(P)} a_i E_i$ such that $L.E_i = -1$ for every $E_i \in \Null(P)$. Then, $L$ is effective by the negativity lemma, since every irreducible component of $L$ is contracted by the anticanonical morphism $f$. Thus, the proof is complete. See Remark \ref{remark} for an alternative proof.
\end{proof}

\subsection{$NS \subset NP$} Take $Y = X$ and $\Delta_Y = \Delta$; then, the assertion follows.

\subsection{$NP \subset EP$}
Let $f \colon X \rightarrow Y$ be the minimal resolution. Then, $f \colon X \rightarrow Y$ is a log resolution of a klt del Pezzo pair $(Y, \Delta_Y)$ for some snc effective $\Q$-divisor $\Delta_Y$. Using the adjunction formula, for every curve $C$ contracted by $f$, we obtain
$$(-K_X + f^{*}K_Y).C=-K_X.C=C^2 + 2-2p_a(C) \leq 0.$$
Thus, by the negativity lemma, the divisor $-K_X + f^{*}K_Y$ is effective, and hence, the divisor
$$-(K_X + f^{-1}_*\Delta_Y) + f^{*}(K_Y + \Delta_Y) = (-K_X + f^{*}K_Y) + (f^* \Delta_Y - f^{-1}_*\Delta_Y)$$
is also effective.

\subsection{$EP \subset LD$}
Let $(Y, \Delta_Y)$ be a klt del Pezzo pair, and let $f \colon X \rightarrow Y$ be a log resolution of $(Y, \Delta_Y)$. By the assumption,
$$-\sum a_i E_i = -(K_X + \Delta_X) + f^*(K_Y + \Delta_Y)$$
is effective, where $\Delta_X  := f^{-1}_*\Delta_Y$ and each $E_i$ denotes an irreducible component of the $f$-exceptional divisor. Thus, we have $0 \leq -a_i <1$ for every $i$. Since $f^{*}(-(K_Y + \Delta_Y))$ is nef and big, the anticanonical divisor
$$-K_X = f^{*}(-(K_Y+ \Delta_Y)) + \Delta_X - \sum a_i E_i$$
is big, i.e., $X$ is a big anticanonical surface. Note that $\lfloor \Delta_X - \sum a_i E_i \rfloor =0$.

Since $-(K_Y + \Delta_Y)$ is ample, $D:=d \cdot f^{*}(-(K_Y + \Delta_Y))$ is a nef and big $\Z$-divisor on a smooth surface $X$ for some integer $d>0$. By Kodaira's lemma (see \cite[Proposition 2.61]{KM98}), there exists an effective divisor $E$ and ample $\Q$-divisors $A_k$ such that $D \equiv A_k + \frac{1}{k}E$ for a sufficiently large integer $k>0$. For a sufficiently large and divisible integer $m>0$, the divisor $mA_k$ is a very ample $\Z$-divisor. By Bertini's theorem (\cite[Theorem II.8.18]{Har}), we can choose a prime divisor $A \in |mA_k|$. Note that $X$ is a rational surface by \cite[Proposition 3.6]{Nak07}. Then, we have $D = \frac{1}{m}A + \frac{1}{k}E$, since the linear equivalence and the numerical equivalence coincides on $X$. Thus, the effective $\Q$-divisor $\frac{1}{dm}A + \frac{1}{dk}E + \Delta_X - \sum a_i E_i$ is linearly equivalent to $-K_X$, and $\lfloor \frac{1}{dm}A + \frac{1}{dk}E + \Delta_X - \sum a_i E_i \rfloor =0$ possibly by replacing $k$ and $m$ with sufficiently large values.

To derive a contradiction, suppose that the anticanonical model of $X$ contains a non-klt singularity. Let $-K_X = P+N$ be the Zariski decomposition. Recall that $N$ has an irreducible component whose coefficient is at least $1$. By \cite[Lemma 2.4]{Sak84}, we have $|-nK_X|=|nP|+nN$ for a positive integer $n$ such that $nK_X, nP$, and $nN$ are $\Z$-divisors, and hence, for every effective $\Q$-divisor $F$ linearly equivalent to $-K_X$, we have $\lfloor F \rfloor \neq 0$, which is a contradiction. Thus, we complete the proof of Theorem \ref{klt}.

\begin{remark}\label{remark}
We give an alternative proof of Claim \ref{cl1} motivated by \cite[Lemma 6]{CS08}. Let $H$ be an ample divisor on $X$, and let $\epsilon' > 0$ be a sufficiently small rational number. Since $-K_X$ is big, by \cite[Example 1.11]{ELMNP} (more precisely, as a consequence of \cite[Theorem 0.3]{Nm} in $\cha(k)=0$ and \cite[Theorem 0.2]{Ke} in $\cha(k)>0$), we obtain
$$\mathbf{B}_{+}(-K_X) = \Null(P),$$
where $\mathbf{B}_{+}$ denotes the augmented base locus, i.e.,
$$\mathbf{B}_{+}(-K_X) = \mathbf{B}(-K_X - 2 \epsilon' H)$$
for a small positive rational number $\epsilon'$. Here, $\mathbf{B}$ denotes the stable base locus. Thus, we have
$$ \Null(P) =  \mathbf{B}(-K_X - 2 \epsilon' H).$$
By applying the cone theorem (see \cite[Theorem 1.24]{KM98}, and see also \cite[Theorem 4.4]{Ta} for positive characteristic case), we have
$$\overline{NE}(X) = \overline{NE}(X)_{(K_X + \epsilon' H) \geq 0 } + \sum_{i=1}^k \R_{\geq 0}[C_i].$$
For an irreducible curve $C$ satisfying $(K_X + \epsilon' H).C \geq 0$, we have $-(K_X + 2\epsilon' H).C < 0$, and hence, $C \in \Null(P)$. Let $\Cone(\Null(P))$ be the cone generated by the elements of $\Null(P)$. Then, $\overline{NE}(X)_{(K_X + \epsilon H) \geq 0 } \subset \Cone(\Null(P))$. Since $P^2 > 0$, the divisor class of $P$ is not in $\Cone(\Null(P))$. Furthermore, $\Ample(X) \cap \Cone(\Null(P)) = \emptyset$. Consider the hyperplane $H_i$ in $N^1(X):= \Pic(X) \otimes \R$ orthogonal to each extremal ray $\R_{\geq 0}[C_i]$ for $1 \leq i \leq k$. Since $C_i.P=0$, the hyperplane $H_i$ passes through $P$. The hyperplane $H_i$ seperates $C_i$ and the ample cone $\Ample(X)$ in $N^1(X)$. Thus, we can take an ample divisor $A$ such that the line $l$ through $A$ and $P$ meets $\Cone(\Null(P))$. Choose a $\Q$-divisor $L$ in the intersection of $l$ and the interior of $\Cone(\Null(P))$. Then, $L$ is effective and supported in $\Null(P)$. Moreover, $L.C_i <0$ for $1 \leq i \leq k$, since $A.C_i >0$ and $P.C_i=0$. Hence, we have proved Claim \ref{cl1}.

\begin{center}
\begin{tikzpicture}[line cap=round,line join=round,>=triangle 45,x=1.0cm,y=1.0cm]
\clip(-3.1,-1.0) rectangle (3.8,4.5);
\draw (-2.88,1)-- (-1.46,1.42);
\draw (-1.46,1.42)-- (-0.02,0.76);
\draw (-0.02,0.76)-- (0.32,-0.5);
\draw (-1.52,4.34)-- (-1,3.06);
\draw (-1,3.06)-- (0.88,2.04);
\draw (0.88,2.04)-- (2.74,2.56);
\draw (-1.7,-0.42)-- (0.7,3.98);
\draw (0.7,3.98)-- (-1.7,-0.42);
\draw (-0.1,3.66) node[anchor=north west] {$L$};
\draw (-1.1,1.7) node[anchor=north west] {$P$};
\draw (-1.9,0.54) node[anchor=north west] {$A$};
\draw (0.4,0.02) node[anchor=north west] {$\Nef(X)$};
\draw (1.28,2.22) node[anchor=north west] {$\Cone(\Null(P))$};
\draw (-2.1,-0.3) node[anchor=north west] {$l$};
\begin{scriptsize}
\fill [color=black] (-0.86,1.14) circle (2.0pt);
\fill [color=black] (-1.44,0.1) circle (2.0pt);
\fill [color=black] (0.24,3.15) circle (2.0pt);
\end{scriptsize}
\end{tikzpicture}
\end{center}

\end{remark}

As a direct consequence of Theorem \ref{klt}, we give a simple criterion to determine when a smooth projective surface becomes a klt del Pezzo pair using the Zariski decomposition.

\begin{corollary}\label{cri1}
Let $X$ be a big anticanonical surface, and let $-K_X = P+N$ be the Zariski decomposition. Then, there is an effective $\Q$-divisor $\Delta$ such that $(X, \Delta)$ is a klt del Pezzo pair if and only if $\lfloor N \rfloor = 0$.
\end{corollary}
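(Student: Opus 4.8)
The plan is to deduce Corollary \ref{cri1} directly from Theorem \ref{klt}, using the characterization $ES = LD$ together with the description of the Zariski decomposition of $-K_X$ relative to the anticanonical morphism established in Lemma \ref{snc} and the paragraph following it. The statement is an equivalence, so I would prove the two implications separately, and I expect the forward direction to be the one requiring real content while the reverse direction is essentially a restatement of the $LD \subset NS$ construction.

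For the reverse implication, suppose $\lfloor N \rfloor = 0$. Let $f \colon X \to Y$ be the anticanonical morphism to the anticanonical model $Y$. Writing $-K_X = f^*(-K_Y) - \sum a_i E_i$ with $P = f^*(-K_Y)$ and $N = -\sum a_i E_i$, the condition $\lfloor N \rfloor = 0$ says precisely that $0 \leq -a_i < 1$ for every $i$, i.e. every discrepancy satisfies $a_i > -1$. By the criterion following Lemma \ref{snc}, this forces $(Y,0)$ to be klt, so $Y$ is a klt del Pezzo surface (it is a del Pezzo surface with rational singularities by Theorem \ref{ant}, and the discrepancy condition upgrades this to klt). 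Hence $X \in LD$, and by Theorem \ref{klt} we have $X \in ES$, so there exists an effective $\Q$-divisor $\Delta$ making $(X,\Delta)$ a klt del Pezzo pair. (Alternatively, one can simply invoke the explicit construction of Subsection \ref{LDNS}, which produces such a $\Delta$ supported on $N$ and $\Null(P)$.)

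For the forward implication, suppose $(X,\Delta)$ is a klt del Pezzo pair for some effective $\Q$-divisor $\Delta$. Then $X \in ES$, so by Theorem \ref{klt} we have $X \in LD$; that is, the anticanonical model $Y$ of $X$ is a klt del Pezzo surface. In particular $Y$ has klt singularities, so in the expression $N = -\sum a_i E_i$ every discrepancy satisfies $a_i > -1$, which by the paragraph after Lemma \ref{snc} is equivalent to $0 \leq -a_i < 1$ for all $i$. This gives $\lfloor N \rfloor = 0$, as desired.

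The only genuine subtlety, and the step I would flag as the main obstacle, is the bookkeeping that identifies the two numerical conditions: the rounding-down condition $\lfloor N \rfloor = 0$ on the negative part of the Zariski decomposition, and the discrepancy condition $a_i > -1$ defining klt singularities on $Y$. This identification is exactly the content of the remark following Lemma \ref{snc}, where the Zariski decomposition of $-K_X$ is matched term-by-term against the pullback formula $-K_X = f^*(-K_Y) - \sum a_i E_i$; once this dictionary is in place, both implications are immediate consequences of Theorem \ref{klt}. I would therefore spend the bulk of the write-up making this correspondence explicit and would keep the appeals to Theorem \ref{klt} and Theorem \ref{ant} brief.
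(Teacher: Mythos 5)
Your forward implication coincides with the paper's own: apply Theorem \ref{klt} (via $ES = LD$) to conclude that the anticanonical model $Y$ is a klt del Pezzo surface, then match $P = f^{*}(-K_Y)$ and $N = -\sum a_i E_i$ with $0 \leq -a_i < 1$ to read off $\lfloor N \rfloor = 0$. That half is fine.

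The reverse implication is where you both diverge from the paper and leave a gap. You deduce that $(Y,0)$ is klt from $0 \leq -a_i < 1$ by citing ``the criterion following Lemma \ref{snc},'' but that paragraph states only the opposite implication: \emph{if} $(Y,0)$ is klt, \emph{then} the coefficients lie in $[0,1)$. The converse is not formal, because at that stage it is not known that $K_Y$ is $\Q$-Cartier; the $a_i$ are merely Mumford-numerical discrepancies, and ``klt'' does not yet make sense for $(Y,0)$. This $\Q$-Cartier issue is exactly the delicate point the paper handles in Subsection \ref{ESLD}, where the dichotomy ``either $-K_Y$ is not $\Q$-Cartier or some coefficient of $N$ is at least $1$'' is resolved by the fact that a numerically lc surface pair has $\Q$-Cartier canonical class (\cite[p.112]{KM98}). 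Your step can be repaired by running that argument: $\lfloor N \rfloor = 0$ makes $(Y,0)$ numerically klt, hence $K_Y$ is $\Q$-Cartier and $(Y,0)$ is klt, with ampleness of $-K_Y$ coming from the construction of the anticanonical morphism. Relatedly, your parenthetical appeal to Theorem \ref{ant} is premature: that theorem concerns \emph{rational} big anticanonical surfaces, and rationality of $X$ is not a hypothesis here (it only follows once klt-ness of $Y$ is known). Finally, note that the paper's own converse bypasses $Y$ entirely and is much lighter: $\lfloor N \rfloor = 0$ makes $(X, N)$ a weak klt del Pezzo pair, and Kodaira's lemma perturbs $P$ to an ample $-(K_X + \Delta)$. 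Your detour through $LD = NS$ (the Claim \ref{cl1} construction) does work once the klt-ness of $Y$ is honestly established, but it invokes the heaviest part of Theorem \ref{klt} where a one-line perturbation suffices.
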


\begin{proof}
Suppose that $(X, \Delta)$ is a klt del Pezzo pair. Let $f \colon X \rightarrow Y$ be the anticanonical morphism. By Theorem \ref{klt}, the anticanonical model $Y$ has at worst klt singularities. We have
$$-K_X = f^{*}(-K_Y) + \sum a_i E_i,$$
where $E_i$ denotes an irreducible component of the $f$-exceptional divisor, and $0 \leq a_i < 1$ for each $i$. Then, we can easily see that the Zariski decomposition $-K_X = P+N$ is given by $P=f^* (-K_Y)$ and $N=\sum a_i E_i$. Thus, $\lfloor N \rfloor = 0$.

Conversely, suppose that  $\lfloor N \rfloor = 0$. Then, $(X, N)$ is a weak klt del Pezzo pair. Thus, we get the conclusion by Kodaira's lemma.
\end{proof}

\section{Weak LC del Pezzo pairs} \label{lc_s}
In this section, we prove Theorem \ref{lc} by showing   the following inclusions.
\begin{itemize}
\item $WNS \subset WES \subset LCD \subset WNS$
\item $WNS \subset WNP \subset  WEP \subset WNS$
\end{itemize}
The proofs of the inclusions $WNS \subset WES, WES \subset LCD, WNS \subset WNP$ and $WNP \subset WEP$ are the same as those of the corresponding cases in the proof of Theorem \ref{klt}, and hence, we only have to show the inclusions $LCD \subset WNS$ and $WEP \subset WNS$.

\subsection{$LCD \subset WNS$}\label{LCNS}
Let $X$ be a big anticanonical surface, and let $f \colon X \rightarrow Y$ be the anticanonical morphism. We assume that $Y$ is a lc del Pezzo surface. Then, we have
$$K_X = f^{*}K_Y + \sum a_i E_i,$$
where each $E_i$ denotes the irreducible component of the $f$-exceptional divisor, and $a_i$ is a rational number such that $0 \leq -a_i \leq 1$ for each $i$. Recall that the Zariski decomposition $-K_X = P+N$ is given by $P=f^*(-K_Y)$ and $N=-\sum a_i E_i$. The assertion $LCD \subset WNS$ immediately follows from Lemma \ref{snc} by letting $\Delta:=N$.

\subsection{$WEP \subset WNS$}
Let $f \colon X \rightarrow Y$ be a log resolution of a weak lc del Pezzo pair $(Y, \Delta_Y)$ such that the $\Q$-divisor
$$D := -(K_X + f_*^{-1}\Delta_Y) + f^{*}(K_Y + \Delta_Y)$$
is effective. Note that every irreducible component of $D$ is $f$-exceptional. Since $f$ is a log resolution, the effective $\Q$-divisor $\Delta := f_*^{-1}\Delta_Y + D$ is snc. Moreover, every coefficient of the irreducible components of $\Delta$ is at most $1$. Note that the divisor
$$-(K_X + \Delta) = -(K_X +  f_*^{-1}\Delta_Y + D) = f^*(-(K_Y+\Delta_Y))$$
is nef and big. Thus, $(X, \Delta)$ is a weak lc del Pezzo pair. Therefore, we complete the proof of Theorem \ref{lc}.

We also have a simple criterion to determine when a smooth projective surface becomes a weak lc del Pezzo pair. The proof is the same as that of Corollary \ref{cri1}.

\begin{corollary}\label{cri2}
Let $X$ be a big anticanonical surface, and let $-K_X = P+N$ be the Zariski decomposition. Then, there is an effective $\Q$-divisor $\Delta$ such that $(X, \Delta)$ is a weak lc del Pezzo pair if and only if every coefficient of an irreducible component of $N$ is at most $1$.
\end{corollary}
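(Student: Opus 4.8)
The plan is to mirror the proof of Corollary \ref{cri1}, with Theorem \ref{lc} playing the role of Theorem \ref{klt} and the condition that every coefficient of $N$ be at most $1$ replacing $\lfloor N \rfloor =0$. Throughout, let $f \colon X \to Y$ be the anticanonical morphism to the anticanonical model $Y$. The crux of both implications is the equivalence
$$\text{every coefficient of } N \text{ is at most } 1 \iff Y \text{ is an lc del Pezzo surface},$$
that is, $X \in LCD$; granting this, Theorem \ref{lc} yields $LCD = WES$, which is exactly the assertion. When $-K_Y$ is $\Q$-Cartier, the computation at the end of Section \ref{prelim} gives $P = f^{*}(-K_Y)$ and $N = -\sum a_i E_i$ with the $a_i$ the discrepancies of $(Y,0)$, so that the coefficient bound on $N$ is literally the lc condition $a_i \geq -1$.

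For the forward implication I would assume $(X,\Delta)$ is a weak lc del Pezzo pair for some effective $\Q$-divisor $\Delta$, so $X \in WES$. By Theorem \ref{lc}, $X \in LCD$, i.e.\ $Y$ is an lc del Pezzo surface; in particular $-K_Y$ is $\Q$-Cartier, and the discrepancy computation at the end of Section \ref{prelim} gives $0 \leq -a_i \leq 1$ for each $i$. Hence every coefficient $-a_i$ of $N$ is at most $1$.

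For the converse I would assume every coefficient of $N$ is at most $1$ and show directly that $(X,N)$ is a weak lc del Pezzo pair, so that $\Delta = N$ suffices. Since $-(K_X + N) = P$ is the positive part of the Zariski decomposition of the big divisor $-K_X$, it is automatically nef and big; unlike in Corollary \ref{cri1}, no appeal to Kodaira's lemma is needed, because the weak lc condition requires only nef and bigness rather than ampleness. It then remains to verify that $(X,N)$ is lc. For this I would first show that $Y$ is an lc del Pezzo surface: pushing forward $-K_X = P + N$ along $f$ and using $f_{*}N = 0$ identifies $-K_Y$ with the ample $\Q$-Cartier polarization $H$ of the model (so $P = f^{*}(-K_Y)$), and the coefficient bound translates into $a_i \geq -1$, i.e.\ $(Y,0)$ is lc. Once $Y$ is known to be lc, Lemma \ref{snc} shows that $\Null(P)$, and hence $N$, has snc support, so $(X,N)$ with all coefficients at most $1$ is lc; equivalently, one may simply invoke the inclusion $LCD \subset WNS$ from Subsection \ref{LCNS}.

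The step I expect to require the most care is the $\Q$-Cartierness of $-K_Y$ in the converse: since the anticanonical ring of a big anticanonical surface need not be finitely generated, one must make sure that the coefficient bound genuinely forces $(Y,0)$ to be lc and not merely \emph{numerically} lc. This is precisely the subtlety handled in Subsection \ref{ESLD}, and I would reuse that argument: the numerical lc property of $(Y,0)$ together with \cite[p.112]{KM98} upgrades $-K_Y$ to a $\Q$-Cartier divisor, after which the honest discrepancies coincide with the numerical ones and the application of Lemma \ref{snc} goes through verbatim.
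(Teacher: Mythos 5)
Your proposal is correct and takes essentially the same route as the paper, which simply declares the proof of Corollary \ref{cri2} to be the same as that of Corollary \ref{cri1}: the forward direction via Theorem \ref{lc} together with the Zariski decomposition $P=f^{*}(-K_Y)$, $N=-\sum a_iE_i$, and the converse by taking $\Delta=N$ as in Subsection \ref{LCNS}. Your two refinements---noting that Kodaira's lemma is unnecessary since only nefness and bigness of $-(K_X+\Delta)=P$ is required, and upgrading numerically lc to lc (hence $-K_Y$ $\Q$-Cartier) via \cite[p.112]{KM98} before applying Lemma \ref{snc} to get snc support of $N$---are exactly the details the paper leaves implicit.
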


\section{Existence of good boundary divisors}\label{bd_s}
First, we give an example of a klt log del Pezzo pair whose log resolutions are never big anticanonical surfaces.

\begin{example}\label{ex}
Consider nine general points $p_1, \ldots, p_9$ in $\P^2$. For $1 \leq i \leq 9$, take three distinct lines $l_i^1, l_i^2, l_i^3$ meeting at $p_i$ but not passing through any of the other eight points.  Let $\Delta := \frac{1}{10} \sum_{i=1}^9 \sum_{j=1}^{3} l_i^j$. Then, $(\P^2, \Delta)$ is a klt del Pezzo pair. Every log resolution $X \rightarrow \P^2$ of  $(\P^2, \Delta)$ factors through the surface obtained by blowing-up at the nine chosen points $p_1, \ldots, p_9$ of $\P^2$, and hence, $X$ is not a big anticanonical surface. In particular, there is no effective $\Q$-divisor $\Delta_X$ on $X$ such that $(X, \Delta_X)$ is a klt (or lc) del Pezzo pair.
\end{example}

We remark that $(\P^2, 0)$ is a klt del Pezzo pair and itself is a log resolution. Proposition \ref{newbd} says that this always happens, i.e., there always exists a \emph{good} boundary divisor for any klt del Pezzo pair such that a log resolution is also a klt del Pezzo pair.

Now, we prove Proposition \ref{newbd}. Let $(Y, \Delta_Y)$ be a klt del Pezzo pair such that $-(K_{Z}+g_{*}^{-1}\Delta_Y)  + g^{*}(K_Y + \Delta_Y)$ is not effective for every log resolution $g \colon Z \rightarrow Y$ of $(Y, \Delta_Y)$. We shall show that there exists an effective $\Q$-divisor $\Delta_Y'$ on $Y$ such that $(Y, \Delta_Y')$ is a klt del Pezzo pair and  $-(K_{X} + f_{*}^{-1}\Delta_{Y}')  + f^{*}(K_Y + \Delta_Y')$ is effective for some log resolution $f \colon X  \rightarrow Y$ of  $(Y, \Delta_Y')$.

Let $f \colon X \rightarrow Y$ be the minimal resolution. Then, we have
$$-K_{X} = -f^{*}K_Y - \sum a_i E_i$$
where each $E_i$ denotes an irreducible component of the whole exceptional divisor of $f$. By the negativity lemma, each $-a_i \geq 0$. Thus, $-K_X$ is  big, since $-K_Y = -(K_Y + \Delta_Y) + \Delta_Y$ is big.  Let $\Delta_X := f_{*}^{-1}\Delta_Y$.  Note that
$$-(K_{X} + \Delta_{X}) = -f^{*}(K_Y + \Delta_Y) + \sum b_i E_i$$
for some rational numbers $b_i$. Then, we have
$$\sum b_i E_i=-K_{X}+f^{*}K_Y  - \Delta_{X}+f^{*}\Delta_Y=-\sum a_i E_i  +( f^{*}\Delta_Y- f_{*}^{-1}\Delta_Y),$$
and hence, every $b_i \geq 0$.

\begin{claim}
$(X, \Delta_X + \sum b_i E_i)$ is a weak klt del Pezzo pair.
\end{claim}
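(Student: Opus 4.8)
The plan is to verify the two defining properties of a weak klt del Pezzo pair for the boundary $B := \Delta_X + \sum b_i E_i$ directly, and the whole argument hinges on the observation that $(X,B) \to (Y,\Delta_Y)$ is \emph{crepant}. Substituting the displayed expression $-(K_X+\Delta_X) = -f^*(K_Y+\Delta_Y) + \sum b_i E_i$ into $K_X + B$ yields
\[
K_X + B = (K_X + \Delta_X) + \sum b_i E_i = f^*(K_Y + \Delta_Y),
\]
so that $-(K_X + B) = f^*(-(K_Y+\Delta_Y))$. This single identity drives both halves of the proof.

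For the positivity, since $(Y,\Delta_Y)$ is a klt del Pezzo pair, $-(K_Y+\Delta_Y)$ is ample; its pullback under the birational morphism $f$ is nef, and big because $f$ is birational (so that the top self-intersection number is preserved and remains positive). Hence $-(K_X+B)$ is nef and big. Note also that $K_X + B$ is $\Q$-Cartier because $X$ is smooth and $K_Y + \Delta_Y$ is $\Q$-Cartier.

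For the klt property, I would first check $\lfloor B \rfloor = 0$. The components of $\Delta_X = f_*^{-1}\Delta_Y$ are strict transforms of components of $\Delta_Y$ and carry the same coefficients, which are $< 1$ because $(Y,\Delta_Y)$ is klt; these are disjoint from the exceptional locus, so they contribute no integral part. For the exceptional coefficients $b_i$, comparing the crepant identity above with the discrepancy equation $K_X + \Delta_X = f^*(K_Y+\Delta_Y) + \sum \alpha_i E_i$ of the klt pair $(Y,\Delta_Y)$ gives $b_i = -\alpha_i$; since $\alpha_i > -1$ we conclude $b_i < 1$, and together with $b_i \geq 0$ (already established) this yields $\lfloor B \rfloor = 0$. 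Finally, the discrepancy of any divisor $E$ over $X$ with respect to $(X,B)$ equals its discrepancy over $Y$ with respect to $(Y,\Delta_Y)$: pulling back $K_X + B = f^*(K_Y+\Delta_Y)$ to a common log resolution $g \colon Z \to X$ with $h = f \circ g$, one has $g^*(K_X+B) = h^*(K_Y+\Delta_Y)$, whence $K_Z - g^*(K_X+B) = K_Z - h^*(K_Y+\Delta_Y)$ and the two discrepancy divisors coincide coefficient by coefficient (crepant invariance, as in \cite{KM98}). As every such discrepancy exceeds $-1$ for the klt pair $(Y,\Delta_Y)$, the same holds for $(X,B)$, completing the verification that $(X,B)$ is klt.

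The step I expect to require the most care is the identification $b_i = -\alpha_i$ together with the crepant invariance of discrepancies: one must check that the divisors $E_i$ appearing on the minimal resolution are genuinely among those for which the klt condition on $(Y,\Delta_Y)$ forces discrepancy $> -1$, and that passing from $(Y, \Delta_Y)$ to $(X,B)$ alters no discrepancy. Everything else---the nef and big conclusion and the bookkeeping of the coefficients of $\Delta_X$---is routine once the crepant identity is in hand.
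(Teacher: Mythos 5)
Your proof is correct and takes essentially the same route as the paper: both arguments rest on the crepant identity $K_X + \Delta_X + \sum b_i E_i = f^*(K_Y+\Delta_Y)$, and your appeal to $b_i = -\alpha_i < 1$ together with crepant invariance of discrepancies is precisely what the paper verifies by hand on a common log resolution $Z \to X \to Y$ (its contradiction argument assuming $b_{i_0} \geq 1$ is just an inline proof of the standard fact that the klt condition forces discrepancy $> -1$ for \emph{every} divisor over $Y$, including the $E_i$ living on the minimal resolution, which need not be a log resolution of $(Y,\Delta_Y)$). One wording slip worth fixing: the components of $\Delta_X = f_*^{-1}\Delta_Y$ need not be \emph{disjoint} from the exceptional locus --- they merely share no irreducible component with $\sum E_i$, which is all your coefficient bookkeeping for $\lfloor \Delta_X + \sum b_i E_i \rfloor = 0$ actually requires.
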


\begin{proof}
Suppose that $b_{i_0} \geq 1$ for some $i_0$. Let $h \colon Z \rightarrow X$ be a log resolution of $(X, \Delta_X + \sum b_i E_i)$. Then, $g:=f \circ h \colon Z \rightarrow Y$ is a log resolution of $(Y, \Delta_Y)$.
\[
\xymatrix{
Z \ar[rr]^{g} \ar[rd]_h && Y\\
&X \ar[ru]_f &}
\]
Let $\Delta_Z := g_{*}^{-1}\Delta_Y = h_{*}^{-1}\Delta_X$.
Then, we have
\begin{displaymath}
\begin{array}{l}
\nonumber  -(K_Z + \Delta_Z) + g^{*}(K_Y + \Delta_Y) \\
\nonumber =  -(K_Z + \Delta_Z) + h^{*}(K_{X} + \Delta_{X}) + h^{*}(-(K_{X} + \Delta_{X}) + f^*(K_Y + \Delta_Y))\\
\nonumber =  -(K_Z + \Delta_Z) + h^{*}(K_{X} + \Delta_{X}) + h^{*}(\sum b_i E_i).
\end{array}
\end{displaymath}
The effective divisor $-(K_Z + \Delta_Z) + h^{*}(K_{X} + \Delta_{X})$ is supported on the $h$-exceptional divisor, and hence, the coefficient of $h_{*}^{-1}E_{i_0}$ in the divisor $-(K_Z + \Delta_Z) + g^{*}(K_Y + \Delta_Y)$ is at least $1$. This is a contradiction to $(Y, \Delta_Y)$ being a klt pair. Thus, $0 \leq b_i < 1$ for every $i$, and hence, $\lfloor \Delta_X + \sum b_i E_i \rfloor =0$.

On the other hand, we have
$$K_Z + (f \circ h)_{*}^{-1}\Delta_Y = (f \circ h)^{*}(K_Y + \Delta_Y) + \sum c_j F_j,$$
where each $F_j$ denotes an irreducible component of the the $f \circ h$-exceptional divisor. Since $(f \circ h)_{*}^{-1}\Delta_Y = h_{*}^{-1} \Delta_X$ and $f^{*}(K_Y + \Delta_Y)=(K_X + \Delta_X) + \sum b_i E_i $, we obtain
$$K_Z + h_{*}^{-1}(\Delta_X + \sum b_i E_i) = h^{*}(K_X + (\Delta_X + \sum b_i E_i)) + \sum c_j F_j + h_{*}^{-1}(\sum b_i E_i).$$
Since every $c_j > -1$, the pair $(X, \Delta_X + \sum b_i E_i)$ is klt. Now, $-(K_{X}+\Delta_X + \sum b_i E_i)= -f^{*}(K_Y + \Delta_Y)$ is nef and big, and hence, we have proved the claim.
\end{proof}

Let $-K_X = P+N$ be the Zariski decomposition. By Kodaira's lemma, there exists an effective $\Q$-divisor $\Delta_{X}'$ on $X$ such that $(X, \Delta_{X}')$ is a klt del Pezzo pair. Thus, there exists an snc effective $\Q$-divisor $\Delta_X''$ supported on $\Null(P)$ such that $(X, \Delta_X'')$ is a klt del Pezzo pair (see Subsections \ref{LDNS} and \ref{LCNS}).

Now, let $\Delta_{Y}':= f_{*}\Delta_X''$. Then, Proposition \ref{newbd} follows by Lemma \ref{adj} and Lemma \ref{image}. Indeed, by Lemma \ref{adj}, the minimal resolution $f \colon X \rightarrow Y$ contracts curves supported on $\Null(P)$. Thus, $f$ is a log resolution of $(Y, f_{*}\Delta_X'')$, since the strict transform $f_{*}^{-1}f_{*}\Delta_X''$ and exceptional divisors are supported on $\Null(P)$ (see Lemma \ref{snc} for sncness of $\Null(P)$). By Lemma \ref{image}, $(Y, f_{*}\Delta_X'')$ is a klt del Pezzo pair. Now, the divisor
$$-(K_X + f_{*}^{-1}f_{*} \Delta_X'') + f^{*}(K_Y + f_{*}\Delta_X'') = -\sum a_i E_i + (f^{*}f_{*}\Delta_X'' - f_{*}^{-1}f_{*}\Delta_X'')$$
is effective. Hence, the proof of Proposition \ref{newbd} is complete.

\begin{lemma}\label{adj}
If $f \colon X \rightarrow Y$ is the minimal resolution, then
every $f$-exceptional curve $E_i$ is contracted by the anticanonical morphism $f' \colon X \rightarrow Y'$.
\end{lemma}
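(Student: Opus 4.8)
The plan is to argue purely numerically via the Zariski decomposition $-K_X = P + N$, showing that each $f$-exceptional curve lies in $\Null(P)$, which is exactly the locus contracted by the anticanonical morphism. Recall that $X$ is a big anticanonical surface (this was established just above, since $-K_X$ is big), so the anticanonical morphism $f' \colon X \rightarrow Y'$ exists, and it contracts precisely those integral curves $C$ with $P \cdot C = 0$, because $P = f'^{*}(-K_{Y'})$ with $-K_{Y'}$ ample. Thus it suffices to prove $P \cdot E_i = 0$ for every $f$-exceptional curve $E_i$.

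I would argue by contradiction. Suppose some $f$-exceptional $E_i$ satisfies $P \cdot E_i > 0$; note that $P$ is nef, so $P \cdot E_i \geq 0$ in any case. Since $f$ is the minimal resolution, $K_X$ is $f$-nef, so $K_X \cdot E_i \geq 0$, that is, $-K_X \cdot E_i \leq 0$. Writing $N \cdot E_i = (-K_X) \cdot E_i - P \cdot E_i$, I obtain $N \cdot E_i \leq -P \cdot E_i < 0$.

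Now the two standard properties of the Zariski decomposition close the argument. First, $N$ is effective, so $N \cdot E_i < 0$ forces $E_i$ to be a component of $\Supp(N)$, since an effective divisor meets any integral curve outside its support nonnegatively. Second, $P$ is orthogonal to every component of $N$, i.e.\ $\Supp(N) \subseteq \Null(P)$; hence $P \cdot E_i = 0$, contradicting $P \cdot E_i > 0$. Therefore $P \cdot E_i = 0$ for every $f$-exceptional curve $E_i$, and each such curve is contracted by $f'$.

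The argument is short, and the only point requiring care is the inequality $-K_X \cdot E_i \leq 0$, which is exactly the defining property of the minimal resolution: no $(-1)$-curve is $f$-exceptional, equivalently $K_X$ is relatively nef, so $K_X \cdot E_i \geq 0$. The key conceptual move is to avoid any direct comparison between the two surfaces $Y$ and $Y'$, and instead to extract everything from the orthogonality $\Supp(N) \subseteq \Null(P)$ built into the Zariski decomposition, combined with the effectivity of $N$.
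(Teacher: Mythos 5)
Your proof is correct and takes essentially the same route as the paper's: both deduce $-K_X\cdot E_i\leq 0$ from minimality of the resolution (the paper via the adjunction formula, you via the equivalent statement that $K_X$ is $f$-nef) and then conclude $P\cdot E_i=0$ from nefness of $P$, effectivity of $N$, and the orthogonality $\Supp(N)\subseteq\Null(P)$. The only cosmetic difference is that you argue by contradiction where the paper runs a direct two-case analysis on the sign of $N\cdot E_i$.
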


\begin{proof}
Recall that $f'$ is given by $|mP|$ for some integer $m>0$. By the adjunction formula, we have
$$P.E_i + N.E_i = -K_X.E_i =E_i^2 + 2-2p_a(E_i) \leq 0,$$
and hence, we obtain $N.E_i \leq 0$. If $N.E_i <0$, then $E_i$ is contained in the support of $N$, and hence, $E_i.P=0$. If $N.E_i=0$, then $P.E_i=0$. In any case, $E_i$ is contracted by $f'$.
\end{proof}

\begin{lemma}\label{image}
Let $(X, \Delta)$ be a klt del Pezzo pair, and let $f \colon X \rightarrow Y$ be a contraction to a normal projective surface. Then, $(Y, f_{*} \Delta)$ is also a klt del Pezzo pair.
\end{lemma}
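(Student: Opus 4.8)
The plan is to reduce everything to a numerical computation on $X$ in the spirit of Subsection \ref{ESLD}, using Mumford's intersection theory to make sense of the pullback of $\Q$-Weil divisors on the normal surface $Y$. Since both $X$ and $Y$ are surfaces and $f$ is a contraction, $f$ is birational; let $E_1, \ldots, E_r$ denote the $f$-exceptional curves. Writing $B := K_Y + f_* \Delta$ and letting $f^* B$ be the Mumford pullback, the difference $K_X + \Delta - f^* B$ is supported on the exceptional locus, so I can write $K_X + \Delta \equiv f^* B + \sum a_i E_i$ for uniquely determined rational numbers $a_i$; note this makes sense \emph{before} knowing that $B$ is $\Q$-Cartier. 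The three things to prove are that $B$ is $\Q$-Cartier, that $(Y, f_*\Delta)$ is klt, and that $-B$ is ample.

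First I would pin down the sign of the $a_i$. Since $-(K_X+\Delta)$ is ample it is $f$-ample, and $f^*B$ is $f$-numerically trivial; intersecting $K_X + \Delta \equiv f^*B + \sum a_i E_i$ with each $E_j$ gives $\sum_i a_i (E_i.E_j) = (K_X+\Delta).E_j < 0$. Because the intersection matrix $(E_i . E_j)$ is negative definite with nonnegative off-diagonal entries, the negativity lemma forces $a_i > 0$ for every $i$. I would then check that $(Y, f_*\Delta)$ is \emph{numerically} klt. The numerical discrepancy of $E_i$ over $(Y, f_*\Delta)$ equals $a_i - \delta_i$, where $\delta_i$ is the coefficient of $E_i$ in $\Delta$ (so that $f_*^{-1}f_*\Delta = \Delta - \sum \delta_i E_i$); since $\delta_i < 1$ for a klt boundary and $a_i > 0$, we get $a_i - \delta_i > -1$. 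For a divisor lying over $X$, the fact that $-\sum a_i E_i$ is $f$-nef yields, by the monotonicity of discrepancies, a value at least its discrepancy over $(X, \Delta)$, which is $> -1$. Together with $\lfloor f_*\Delta \rfloor = 0$, this shows $(Y, f_*\Delta)$ is numerically klt.

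The \textbf{main obstacle} is passing from this numerical statement to the genuine $\Q$-Cartier property of $B = K_Y + f_*\Delta$; this is the step that cannot be bypassed and is where the contraction must be seen to produce only mild singularities. I would resolve it exactly as in Subsection \ref{ESLD}: a numerically klt, hence numerically lc, pair on a normal surface has $K_Y + f_*\Delta$ $\Q$-Cartier by \cite[p.112]{KM98}, and once $B$ is $\Q$-Cartier the numerical klt condition becomes the honest klt condition. It is worth recording the geometric reason no bad singularity appears: every contracted curve $C$ satisfies $C^2 < 0$ and $(K_X+\Delta).C < 0$, so adjunction gives $2p_a(C) - 2 < C^2 < 0$, forcing $p_a(C) = 0$.

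Finally, ampleness of $-B$ follows from the Nakai--Moishezon criterion, now that $f^*(-B) \equiv -(K_X+\Delta) + \sum a_i E_i$ is a genuine pullback. For a curve $C' = f(C)$ with $C$ not contracted, the projection formula gives $-B . C' = -(K_X+\Delta).C + \sum a_i (E_i.C) > 0$, since the first term is positive while each $E_i.C \geq 0$ and $a_i > 0$; and $(-B)^2 = (-(K_X+\Delta))^2 - (\sum a_i E_i)^2 \geq (-(K_X+\Delta))^2 > 0$ because the exceptional self-intersection is nonpositive. Hence $-B$ is ample and $(Y, f_*\Delta)$ is a klt del Pezzo pair.
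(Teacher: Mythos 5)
Your proposal is correct, and while its skeleton overlaps with the paper's, the singularity step takes a genuinely different route. The common core: your divisor $\sum a_i E_i$ with $a_i>0$ is exactly the effective divisor $A$ that the paper extracts from the negativity lemma, and your Nakai--Moishezon computation --- positivity on images of non-contracted curves via the projection formula, plus $(-B)^2=(-(K_X+\Delta))^2-(\sum a_iE_i)^2>0$ using the orthogonality of the Mumford pullback to the exceptional curves --- is the same calculation the paper performs (after reducing to a single contracted curve, which you avoid, so no induction is needed). Where you diverge is klt-ness: the paper passes to a common log resolution $h\colon Z\to X$ with $g=f\circ h$ and compares the coefficients of $h_*^{-1}\Delta$ and $g_*^{-1}f_*\Delta$ against an effectivity statement, whereas you stay on $X$, note that the crepant pullback of $(Y,f_*\Delta)$ is the sub-boundary $\Delta-\sum a_iE_i\le\Delta$, and conclude numerically klt by monotonicity of discrepancies (\cite[Lemma 2.27]{KM98}) together with $\lfloor f_*\Delta\rfloor=0$. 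Your route buys an explicit treatment of the $\Q$-Cartier property of $K_Y+f_*\Delta$ --- deduced from the numerically klt condition via \cite[p.112]{KM98}, exactly as the paper itself does in Subsection \ref{ESLD} --- a point the paper's proof of this lemma leaves implicit in its use of Mumford pullback and its appeal to Nakai--Moishezon; the paper's route, conversely, avoids the numerically-klt formalism altogether.

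Two harmless imprecisions, neither a gap. In the monotonicity step the operative input is simply $a_i>0$ (the boundary decreases), not the $f$-nefness of $-\sum a_iE_i$ as such; the latter is what gave you $a_i>0$ via the negativity lemma, so phrase the step as an application of monotonicity to $\Delta-\sum a_iE_i\le\Delta$. And in your decorative adjunction remark, the chain $2p_a(C)-2<C^2$ presumes $K_X.C<0$, which holds when $C\not\subset\Supp\Delta$; if $C$ is a component of $\Delta$ with coefficient $\delta<1$ one instead gets $2p_a(C)-2<(1-\delta)C^2<0$, with the same conclusion $p_a(C)=0$.
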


\begin{proof}
It is sufficient to show that $(Y, f_{*}\Delta)$ is a klt del Pezzo pair when $f \colon X \rightarrow Y$ contracts only one curve.

First, we claim that $-(K_Y+f_{*} \Delta)$ is ample. Note that the divisor
$$-A:=-(K_{X} + \Delta) + f^{*}(K_{Y} + f_{*} \Delta)$$
is supported on the $f$-exceptional divisor, which is an irreducible curve, and hence, $A^2 < 0$. By the negativity lemma, $A$ is effective. For any irreducible curve $C$ on $Y$, we have
$$-(K_Y + f_{*}\Delta).C = -f^{*}(K_Y + f_{*}\Delta).f^{*}C=-(K_X + \Delta).f^{*}C + A.f^{*}C>0$$
since $A.f^{*}C =0$. Moreover,
$$(-f^{*}(K_Y + f_{*}\Delta)-A)^2 = (-f^{*}(K_Y + f_{*}\Delta))^2 + A^2 = (-(K_X+\Delta))^2 > 0,$$
and thus, $(-f^{*}(K_Y + f_{*}\Delta))^2= (-(K_Y + f_{*}\Delta))^2>0$. The claim follows from the Nakai-Moishezon criterion (\cite[Theorem 1.22]{B01}).

Finally, we show that $(Y, f_{*}\Delta)$ is a klt pair. Let $h \colon Z \rightarrow X$ be a log resolution of $(X, \Delta)$ such that $g:=f \circ h \colon Z \rightarrow Y$ is also a log resolution of $(Y, f_{*}\Delta)$. Note that $h_{*}^{-1}\Delta \geq g_{*}^{-1}f_{*}\Delta$. It suffices to show that every coefficient of an irreducible component of the $\Q$-divisor
$$B:=(K_Z + g_{*}^{-1}f_{*}\Delta) -g^{*}(K_Y + f_{*}\Delta) $$
is greater than $-1$. Since $f_{*}\Delta = g_{*} h_{*}^{-1}\Delta$, the divisor
$$(h_{*}^{-1}\Delta - g_{*}^{-1}f_{*}\Delta) +B =  (K_Z + h_{*}^{-1}\Delta) - g^{*}(K_Y + g_{*}h_{*}^{-1}\Delta) $$
is effective by the negativity lemma. Every coefficient of an irreducible component of $h_{*}^{-1}\Delta - g_{*}^{-1}f_{*}\Delta$ is less than $1$, and hence, every coefficient of an irreducible component $B$ is greater than $-1$.
\end{proof}

\begin{remark}
When $\cha(k)=0$, Lemma \ref{image} is the $\dim=2$ case of \cite[Corollary 3.3]{FG} and \cite[Theorem 2.9]{PS}. Our proof is independent of them, and it works for surfaces in arbitrary characteristic.
\end{remark}

\begin{remark}\label{lc_prop}
Proposition \ref{newbd} still holds even if we replace `klt' with `weak lc', and the proof in this section
also works for this case.
\end{remark}

\section{Classification of non-rational weak lc del Pezzo pairs}\label{classification}
In this section, we prove Theorem \ref{non_rat_lc} and Corollary \ref{non_rat_weak_lc}, which give the classification results, and then, we prove Corollary \ref{cox} to describe when the Cox ring is finitely generated.

\begin{proof}[Proof of Theorem \ref{non_rat_lc}]
Let $Y$ be a non-rational lc del Pezzo surface. The minimal resolution $f \colon X \rightarrow Y$ is the anticanonical morphism, and hence, $X$ is a non-rational ruled surface. By Theorem \ref{ant}, there is at least one irreducible curve $C$ on $X$ such that $f(C)$ is a non-rational singularity. Then, $C$ is not contained in any fiber of the ruling $\pi \colon X \rightarrow B$ to a smooth curve, since every $f$-exceptional irreducible curve in a degenerated fiber of $\pi$ is contracted to a rational singularity by \cite[Lemma 14.35]{B01}. Then, $\pi|_C \colon C \rightarrow B$ is a dominant morphism, and hence, we have $p_a(C) \geq g(B) \geq 1$. Using the classification of surface lc singularities (\cite[Theorem 4.7]{KM98}), we conclude that $C$ is a smooth elliptic curve, and other components of the $f$-exceptional divisor are disjoint from $C$. It follows that $B$ is also a smooth elliptic curve.

We claim that $C$ is the only $f$-exceptional curve not contained in any fiber of $\pi$. Suppose that there is another curve $C'$ not contained in any fiber of $\pi$. By \cite[(14.32.2) in p.232]{B01}, we have $h^1 (\mathcal{O}_{C \cup C'}) \leq h^1(\mathcal{O}_X)=1$. Since $C$ and $C'$ are disjoint, we get a contradiction.

To derive a contradiction, suppose that $X$ is not relatively minimal. Then, there is a smooth rational curve $E$ in a fiber of $\pi$ such that $E^2 \leq -1$ and $C.E \geq 1$. Note that $E$ is not contracted by $f$. Let $-K_X = P+N$ be the Zariski decomposition. Since $P.E>0$ and $N.E \geq C.E \geq 1$, we obtain
$$-K_X.E = P.E + N.E >1;$$
thus, by the adjunction formula, we have $-2=K_X.E + E^2 < -2$, which gives a contradiction .

Finally, we show that $C$ is a section of $\pi$. To this end, we only have to check that $C.F =1$ for any fiber $F$ of $\pi$. Suppose that $C.F \geq 2$ for some fiber $F$ of $\pi$. Since $X$ is relatively minimal, $F^2=0$ and $-K_X.F=2$. Furthermore, $N=C$. Since $P.F>0$, we obtain
$$2=-K_X.F = P.F+ C.F >2,$$
which is a contradiction.
\end{proof}

\begin{proof}[Proof of Corollary \ref{non_rat_weak_lc}]
Let $(Y, \Delta)$ be a weak lc del Pezzo pair. Assume that $Y$ is not rational. By the proof of Proposition \ref{newbd} and Remark \ref{lc_prop}, for the minimal resolution $f \colon X \rightarrow Y$, the anticanonical model $Y_0$ of $X$ is a lc del Pezzo surface. For the minimal resoution $X_0 \rightarrow Y_0$, there is a sequence of redundant blow-ups $g \colon X \rightarrow X_0$ by Proposition \ref{red}. This implies the last assertion.

By contracting a redundant curve $E$, we obtain a morphism $h \colon X \rightarrow X'$. Let $-K_{X'}=P'+N'$ be the Zariski decomposition. In this case, the Zariski decomposition $-K_X = P+N$ is given by $P=h^{*}P'$ and $N=h^{*}N' - E$ (cf. \cite[Corollary 6.7]{Sak84} and \cite[Lemma 3.2]{HP}). By Theorem \ref{non_rat_lc}, $N_0=C$ is a smooth elliptic curve where $-K_{X_0}= P_0+N_0$ is the Zariski decomposition. Thus, we obtain $P=g^{*}P_0$ and $N=g_{*}^{-1}C$. Note that every irreducible $g$-exceptional curve different from $N$ is either a ($-1$)-curve meeting an irreducible curve $N$ or a ($-2$)-curve disjoint from $N$ (cf. \cite[Lemma 3.8]{HP}). Furthermore, any connected component of $g$-exceptional ($-2$)-curves forms a chain.

By Lemma \ref{adj}, every $f$-exceptional curve is contracted by the anticanonical morphism $f' \colon X \rightarrow Y_0$. Thus, we only need to show that if $f$ contracts $N$, then $Y$ is a weak lc del Pezzo surface, i.e., $-K_Y$ is nef. Let $D$ be an irreducible curve on $Y$. Then, we have
$$-K_Y.D = f^{*}(-K_Y).f^{*}D = P.f^{*}D \geq 0,$$
and hence, $-K_Y$ is nef.
\end{proof}

\begin{remark}
If $f$ does not contract a $g$-exceptional curve $G$, then we have $-K_Y.f_{*}G$$=0$, i.e., $-K_Y$ is strictly nef.
\end{remark}

\begin{proof}[Proof of Corollary \ref{cox}]
Let $(Y, \Delta)$ be a weak lc del Pezzo pair. According to Corollary \ref{non_rat_weak_lc}, we have three cases: (1) $Y$ is rational, (2) $Y$ is not rational but has at worst rational double points, and (3) $Y$ contains exactly one simple elliptic singularity. For Case (1), the finite generation of the Cox ring of $Y$ follows from \cite[Theorem 1]{TVV11}. For Case (2), the minimal resolution of $Y$ is obtained by a sequence of blow-ups of an elliptic ruled surface, and hence, the Picard group $\Pic(Y)$ is not finitely generated. Thus, the Cox ring of $Y$ is not finitely generated. For Case (3), the Picard group $\Pic(Y)$ is finitely generated by the following.

\begin{claim}\label{coxcl}
Let $(Y, \Delta)$ be a weak lc del Pezzo pair. If $Y$ has a simple elliptic singularity, then $q(Y)=0$.
\end{claim}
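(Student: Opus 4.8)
The plan is to compute $q(Y) = h^1(Y, \mathcal{O}_Y)$ through the minimal resolution $f \colon X \to Y$ and the Leray spectral sequence for $f$. First I would record the geometric input supplied by the classification. Since $Y$ carries a simple elliptic singularity it is non-rational, so by Corollary \ref{non_rat_weak_lc}(1) together with Theorem \ref{non_rat_lc} the resolution $X$ is a blow-up of a relatively minimal ruled surface over a smooth elliptic curve $B$, with ruling $\pi \colon X \to B$, and $f$ contracts a smooth elliptic curve $C$ (the strict transform of the elliptic section) to the simple elliptic point, the remaining $f$-exceptional curves being $(-2)$-chains lying over rational double points. In particular $q(X) = g(B) = 1$, and because $C$ is a section of $\pi$ the map $\pi|_C \colon C \to B$ is an isomorphism.

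Next I would write down the low-degree exact sequence of the Leray spectral sequence for $f$. As $Y$ is normal we have $f_*\mathcal{O}_X = \mathcal{O}_Y$, so this reads
\[
0 \to H^1(Y,\mathcal{O}_Y) \to H^1(X,\mathcal{O}_X) \xrightarrow{\ \phi\ } H^0(Y, R^1 f_*\mathcal{O}_X).
\]
Thus $q(Y) = \dim \ker \phi$, and it suffices to prove that $\phi$ is injective. Here $R^1 f_*\mathcal{O}_X$ is supported on the singular locus of $Y$ and vanishes at the rational double points, so only the simple elliptic point $y_0$ contributes.

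The heart of the argument is to establish injectivity of $\phi$ by composing it with restriction to the contracted elliptic curve. Since the fibres of $\pi$ are trees of rational curves we have $R^1\pi_*\mathcal{O}_X = 0$ and $\pi_*\mathcal{O}_X = \mathcal{O}_B$, so $\pi^* \colon H^1(B,\mathcal{O}_B) \to H^1(X,\mathcal{O}_X)$ is an isomorphism; combined with the fact that $\pi|_C$ is an isomorphism, this shows the restriction map $\rho \colon H^1(X,\mathcal{O}_X) \to H^1(C,\mathcal{O}_C)$ is an isomorphism, hence injective. By the theorem on formal functions the completed stalk $(R^1 f_*\mathcal{O}_X)^{\wedge}_{y_0}$ is the inverse limit of $H^1(C_n,\mathcal{O}_{C_n})$ over the infinitesimal neighbourhoods $C_n$ of the fibre $f^{-1}(y_0)$ whose reduction is $C$, and the composite of $\phi$ with the natural map of this system to $H^1(C,\mathcal{O}_C)$ (projection to the first level followed by restriction to the reduced fibre) is precisely $\rho$. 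Since $\rho$ is injective, so is $\phi$, and therefore $H^1(Y,\mathcal{O}_Y) = 0$, i.e.\ $q(Y) = 0$.

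The main obstacle I anticipate is the compatibility in the final step—that the Leray edge homomorphism, read through formal functions, restricts to the fibre as the ordinary restriction map—together with verifying that the structural description survives the redundant blow-ups, so that $C$ really is a section of an elliptic ruling and $\pi^*$ is an isomorphism on $H^1(\mathcal{O})$. Both are standard but must be stated cleanly; once they are in place the bookkeeping is immediate and no finer information about the simple elliptic singularity (such as its geometric genus) is required.
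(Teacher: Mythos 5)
Your proof is correct and takes essentially the same route as the paper: both compute $q(Y)$ via the Leray sequence $0 \to H^1(\mathcal{O}_Y) \to H^1(\mathcal{O}_X) \to H^0(R^1 f_*\mathcal{O}_X)$ and prove injectivity of the edge map by identifying it (the paper via the inverse-limit description of EGA III, Proposition 4.2.1, you via the theorem on formal functions---the same underlying fact) with restriction to the contracted elliptic curve $C$, reducing everything to the restriction $H^1(\mathcal{O}_X) \to H^1(\mathcal{O}_C)$ being an isomorphism. Your explicit ruling argument ($\pi^*\colon H^1(\mathcal{O}_B)\to H^1(\mathcal{O}_X)$ an isomorphism since $R^1\pi_*\mathcal{O}_X=0$, and $\pi|_C$ an isomorphism since $C$ is a section) supplies precisely the justification for this last isomorphism, which the paper asserts without detail.
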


Note that $-K_Y$ is nef and big, and a Cartier divisor. Let $f \colon X \rightarrow Y$ be the minimal resolution, and let $-K_X = P+N$ be the Zariski decomposition. Then, we have $P = f^{*}(-K_Y)$, and hence, $\bigoplus_{m \geq 0} H^0(\mathcal{O}_Y(-mK_Y)) \simeq \bigoplus_{m \geq 0} H^0(\mathcal{O}_X(mP))$. By Corollary \ref{non_rat_weak_lc}, the anticanonical model $Y_0:= \Proj \bigoplus_{m \geq 0} H^0(\mathcal{O}_Y(-mK_Y))$ of $X$ is a lc del Pezzo surface. Thus, $\bigoplus_{m \geq 0} H^0(\mathcal{O}_X(mP))$ is finitely generated. By \cite[Theorem 2.3.15]{L}, which holds over an algebraically closed field of arbitrary characteristic, $-K_Y$ is semiample. Hence, we obtain the projective birational morphism $\pi \colon Y \rightarrow Y_0$. By Theorem \ref{non_rat_lc}, $Y_0$ is $\Q$-factorial. Since $\pi$ is a blow-up of $Y_0$ at some closed subscheme, every $\pi$-exceptional curve is $\Q$-Cartier. Thus, $Y$ is $\Q$-factorial. In this case, the Cox ring of $Y$ is finitely generated if and only if the Kleiman-Mori cone $\overline{NE}(Y)$ is rational polyhedral and every nef divisor is semiample (see \cite[Proposition 2.9]{HK}).

First, we show that the Kleiman-Mori cone $\overline{NE}(Y)$ is rational polyhedral. By Kodaira's lemma and Bertini's theorem, we can find a very ample prime divisor $A$ and an effective $\Q$-divisor $E$ such that  $-K_Y  = \frac{1}{m}A + \frac{1}{k}E$ for some sufficiently large positive integers $m$ and $k$. Let $\Delta' = \frac{1}{k}E$. For a sufficiently small rational number $\epsilon >0$ and for any curve $C$, we have
$$(K_Y + \Delta' + \epsilon A).C = -(\frac{1}{m}-\epsilon)A.C <0.$$
By the cone theorem (\cite[Theorem 3.2]{Fu} and \cite[Theorem 4.4]{Ta}), the assertion follows.

Now, let $D$ be a nef $\Q$-divisor on $Y$. Recall that  $-K_Y  = \frac{1}{m}A + \frac{1}{k}E$.  Let $\Delta'' =  \frac{1}{m}A + \frac{1}{k}E + \epsilon' D$  for a sufficiently small rational number $\epsilon' >0$. Then, we have $K_Y + \Delta'' = \epsilon' D$. By the abundance theorem for surfaces (\cite[Corollary 1.2]{Fu} and \cite[Theorem 0.2]{Ta}), $D$ is semiample. Thus, we complete the proof.
\end{proof}

\begin{proof}[Proof of Claim \ref{coxcl}]
Let $f \colon X \rightarrow Y$ be the minimal resolution. Then, $f_{*} \mathcal{O}_X = \mathcal{O}_Y$ and $R^q f_{*} \mathcal{O}_X = 0$ for $q \geq 2$. Consider the exact sequence induced by the Leray spectral sequence associated with the morphism $f$.
$$0 \rightarrow H^1(\mathcal{O}_Y) \rightarrow H^1(\mathcal{O}_X) \xrightarrow{\epsilon} H^0(R^1 f_{*} \mathcal{O}_X) \rightarrow \cdots$$
By \cite[Proposition 4.2.1]{G1}, we have $H^0(R^1 f_{*} \mathcal{O}_X) = \proj \lim_Z H^1 (\mathcal{O}_Z)$, where $Z$ runs over all effective divisors on $X$, whose supports are contained in the $f$-exceptional divisor. The map $\epsilon$ is induced by the restriction maps $\epsilon_Z \colon H^1(\mathcal{O}_X) \rightarrow H^1(\mathcal{O}_Z)$. Note that $h^0(R^1 f_{*} \mathcal{O}_X)=1$ and the map $H^1 (\mathcal{O}_Z) \rightarrow H^1(\mathcal{O}_{Z'})$ of each inverse limit is surjective. Let $C$ be an elliptic curve in $X$ contracting to the elliptic singularity on $Y$. Then, the restriction map $\epsilon_C \colon H^1(\mathcal{O}_X) \rightarrow H^1(\mathcal{O}_C)$ is an isomorphism, and hence, $\epsilon$ is an isomorphism. Thus, $q(Y)=0$.
\end{proof}

\section{Surfaces of globally F-regular type and klt del Pezzo pairs}

In this section, we prove Theorem \ref{Freg}. First, we briefly recall the definitions of the notion of F-regularity. See \cite{Sm} and \cite{SS} for further details.

Let $k$ be a perfect field of positive characteristic. A finitely generated $k$-algebra $R$ is called \emph{strongly F-regular} if for every nonzero element $c \in R$, there exists an integer $e >0$ such that $cF^e \colon R \rightarrow F^e_{*} R$ splits in the category of $R$-modules, where $F^e \colon R \rightarrow R$ is the $e$-th iterated Frobenius map. A projective variety $Y$ over $k$ is called \emph{globally F-regular} if the section ring $R(Y, H) := \bigoplus_{m \geq 0 } H^0 (Y, \mathcal{O}_Y (mH))$ for some ample divisor $H$ is strongly F-regular.

\begin{remark}\label{Frem}
(1) Every local ring $\mathcal{O}_{Y, y}$ of a globally F-regular variety $Y$ over $k$ is strongly F-regular for every $y \in Y$ (see \cite[(2.2)]{Sm}).\\
(2) Originally, the notion of F-regularity was defined over any F-finite field of positive characteristic (\cite{Sm}).
\end{remark}

Now, let  $k$ be an algebraically closed field of characteristic zero, and let $Y$ be a projective variety over $k$. Here, we briefly explain the method of reduction to characteristic $p$. For a finitely generated $\Z$-subalgebra $A$ of $k$, we can construct a projective scheme $Y_A$ of finite type over $A$ such that $Y =  Y_A \times_A \Spec k$. Then, for every closed point $\mu \in \Spec A$, the fiber $Y_{\mu}$ of $Y_A$ is a projective variety over the residue field $k(\mu)$ which is a finite field.
\[
\xymatrix{
Y =  Y_A \times_A \Spec k \ar[r] \ar[d] & Y_A  \ar[d] & \ar@{_{(}->}[l]  Y_{\mu} \ar[d] \\
\Spec k \ar[r] & \Spec A & \ar@{_{(}->}[l] \Spec k(\mu)
}
\]
A projective variety $Y$ over $k$ is said to be \emph{of globally F-regular type} if there exists a dense open subset $S$ of closed points in $\Spec A$ such that $Y_{\mu}$ is globally F-regular over $k(\mu)$ for every closed point $\mu \in S$. Let $\{ \mathcal{F}_i \}$ be any finite collection of coherent $\mathcal{O}_Y$-modules. By the generic flatness (\cite[Theorem 6.9.1]{G2}), we may assume that $Y_A$ and all members of the collection $\{ \mathcal{F}_{i,A} \}$ are flat over $A$ by possibly enlarging $A$, where each $\mathcal{F}_i$ is the pull-back of $\mathcal{F}_{i,A}$ via the projection $Y \rightarrow Y_A$. Let $g \colon X \rightarrow Z$ be a morphism of projective varieties over $k$. By possibly enlarging $A$, we may assume that $g$ is induced by a morphism $g_A \colon X_A \rightarrow Z_A$ between projective schemes of finite type over $A$ so that we obtain a morphism $g_{\mu} \colon X_{\mu} \rightarrow Z_{\mu}$ between projective varieties of finite type over $k(\mu)$ for every closed point $\mu \in \Spec A$.

Now, we collect some useful facts on varieties of globally F-regular type over an algebraically closed field $k$ of characteristic zero.

\begin{lemma}\label{Fklt}
Every $\mathbb{Q}$-Gorenstein projective  variety of globally F-regular type is normal and Cohen-Macaulay, and it contains at worst klt singularities.
\end{lemma}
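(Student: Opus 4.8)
The plan is to establish the three asserted properties---normality, Cohen--Macaulayness, and klt-ness---by reduction to characteristic $p$, where the much stronger theory of strongly F-regular rings applies locally. By definition, a variety $Y$ of globally F-regular type admits a model $Y_A$ over a finitely generated $\Z$-subalgebra $A$ of $k$ such that the fibers $Y_\mu$ are globally F-regular over the finite residue fields $k(\mu)$ for all closed points $\mu$ in a dense open subset $S \subset \Spec A$. By Remark \ref{Frem}(1), every local ring of such a $Y_\mu$ is strongly F-regular, so the first move is to import the known algebraic consequences of strong F-regularity: a strongly F-regular ring is normal and Cohen--Macaulay. Thus each fiber $Y_\mu$ (for $\mu \in S$) is normal and Cohen--Macaulay as a scheme over $k(\mu)$.

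The second step is to descend these properties from the fibers back to $Y$ itself. Both normality and Cohen--Macaulayness are open conditions that behave well under the generic-flatness machinery already set up in the excerpt: after possibly enlarging $A$, one arranges $Y_A$ to be flat over $A$, and then normality and Cohen--Macaulayness of the geometric generic fiber $Y = Y_A \times_A \Spec k$ can be deduced from the same properties holding on a dense set of closed fibers. Concretely, I would invoke the standard fact that the locus in $\Spec A$ over which the fibers are normal (resp.\ Cohen--Macaulay) is constructible, and since it contains the dense set $S$, it must contain the generic point; flat base change to $k$ then transfers the property to $Y$.

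For the klt assertion, the plan is to use the characteristic-$p$ notion of F-regularity of pairs together with the fundamental comparison theorem between F-singularities and singularities of the minimal model program. The key input is the theorem (due to Hara--Watanabe, and in this global form to Smith and Schwede--Smith) that a $\Q$-Gorenstein variety $(Y,0)$ whose reductions $Y_\mu$ are strongly F-regular for infinitely many $p$ is in fact klt. Since the $Y_\mu$ for $\mu \in S$ are even globally F-regular, their local rings are strongly F-regular by Remark \ref{Frem}(1), and the $\Q$-Gorenstein hypothesis on $Y$ descends to the fibers after enlarging $A$; the cited correspondence then yields that $(Y,0)$ is klt, i.e.\ $Y$ has at worst klt singularities.

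The main obstacle I anticipate is the careful bookkeeping in the reduction step: one must ensure that the $\Q$-Gorenstein structure, the index of $K_Y$, and the flatness of $Y_A$ are all simultaneously preserved after finitely many enlargements of $A$, so that the characteristic-$p$ statements apply to the \emph{same} family and descend to the \emph{same} geometric generic fiber $Y$. The purely algebraic facts (strongly F-regular $\Rightarrow$ normal and Cohen--Macaulay) and the F-regular/klt correspondence are standard and citable; the genuine work is in verifying that the open/dense conditions on $\Spec A$ interact correctly so that a property holding on the dense subset $S$ of closed points indeed transfers to the generic fiber and hence to $Y$ over $k$.
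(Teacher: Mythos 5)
Your proposal is correct and is essentially the paper's proof unpacked: the paper disposes of this lemma with a one-line citation to \cite[(5.3)]{Sm}, whose proof is precisely the reduction-to-characteristic-$p$ argument you sketch (strong F-regularity of the local rings of the fibers $Y_\mu$ yielding normality and Cohen--Macaulayness, descent to the geometric generic fiber via constructibility of these loci, and the F-regular/klt correspondence of Hara--Watanabe \cite{HW} for the $\Q$-Gorenstein klt assertion). The bookkeeping issues you flag (simultaneous spreading-out of flatness and the $\Q$-Gorenstein structure, and geometric normality over the perfect residue fields $k(\mu)$) are genuine but standard, and are handled in the cited sources.
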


\begin{proof}
The assertion follows from \cite[(5.3)]{Sm}.
\end{proof}

\begin{lemma}\label{Fmor}
Let $g \colon X \rightarrow Z$ be a birational morphism between normal projective varieties. Then, the following hold.
 \begin{enumerate}
    \item If $X$ is of globally F-regular type, then so is $Z$.
    \item Assume that $Z$ is $\Q$-Gorenstein and $-K_X + g^{*}K_Z$ is effective. Then, $X$ is of globally F-regular if and only if so is $Z$.
 \end{enumerate}
\end{lemma}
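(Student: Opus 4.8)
The plan is to prove both statements by reduction to positive characteristic, so that it suffices to establish the corresponding assertions for the reductions $g_\mu \colon X_\mu \to Z_\mu$ over the finite fields $k(\mu)$, for $\mu$ ranging over a dense set of closed points of $\Spec A$. By enlarging $A$ I may assume that $g_A \colon X_A \to Z_A$ is proper and that for every closed point $\mu$ the fibre $g_\mu$ is a proper birational morphism of normal projective varieties with $(g_\mu)_* \mathcal{O}_{X_\mu} = \mathcal{O}_{Z_\mu}$; in the situation of part (2) I also spread out the hypotheses, so that $Z_\mu$ is $\Q$-Gorenstein and $-K_{X_\mu} + g_\mu^* K_{Z_\mu}$ is effective for all $\mu$. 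Since $X$ is of globally F-regular type, $X_\mu$ is globally F-regular for all $\mu$ in a dense open subset $S$, and the point is to transport this property along $g_\mu$.

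For part (1), fix an effective Weil divisor $D$ on $Z_\mu$ and let $D' := g_{\mu*}^{-1} D$ be its strict transform, an effective divisor on $X_\mu$ with no $g_\mu$-exceptional component. Because both varieties are normal and $g_\mu$ is birational, $g_\mu$ is an isomorphism in codimension one over $Z_\mu$, whence $g_{\mu*}\mathcal{O}_{X_\mu}(D') = \mathcal{O}_{Z_\mu}(D)$. Global F-regularity of $X_\mu$ yields, for some $e>0$, a splitting $\sigma \colon F^e_* \mathcal{O}_{X_\mu}(D') \to \mathcal{O}_{X_\mu}$ of the natural map. Since Frobenius commutes with $g_\mu$, applying $g_{\mu*}$ and using $g_{\mu*}\mathcal{O}_{X_\mu} = \mathcal{O}_{Z_\mu}$ and $g_{\mu*}F^e_* = F^e_* g_{\mu*}$ produces a splitting of $\mathcal{O}_{Z_\mu} \to F^e_* \mathcal{O}_{Z_\mu}(D)$. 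As $D$ was arbitrary, $Z_\mu$ is globally F-regular, and hence $Z$ is of globally F-regular type. This is the descent of global F-regularity under birational images, cf. \cite{Sm}.

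For part (2) the implication from $X$ to $Z$ is exactly part (1), so I treat only the converse in characteristic $p$: assuming $Z_\mu$ globally F-regular, I show $X_\mu$ globally F-regular. Write $\Gamma := -K_{X_\mu} + g_\mu^* K_{Z_\mu} \ge 0$; it is supported on the exceptional locus, and for $e$ chosen so that $(p^e-1)K_{Z_\mu}$ is Cartier and $(p^e-1)\Gamma$ is integral one has the divisorial identity $(1-p^e)K_{X_\mu} = g_\mu^*\bigl((1-p^e)K_{Z_\mu}\bigr) + (p^e-1)\Gamma$. By Grothendieck duality for the $e$-th Frobenius, splitting $\mathcal{O}_{X_\mu} \to F^e_* \mathcal{O}_{X_\mu}(D)$ is equivalent to producing a section of $\mathcal{O}_{X_\mu}\bigl((1-p^e)K_{X_\mu}-D\bigr)$ whose Frobenius trace is a unit, and similarly on $Z_\mu$. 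It suffices to do this for a cofinal family of $D$, which I take to be $D = m\,g_\mu^* A + nE$, where $A$ is ample on $Z_\mu$ and $E$ is the reduced exceptional divisor. Since $(p^e-1)\Gamma \ge 0$, one gets the inclusion $g_\mu^*\mathcal{O}_{Z_\mu}\bigl((1-p^e)K_{Z_\mu}-mA\bigr) \subseteq \mathcal{O}_{X_\mu}\bigl((1-p^e)K_{X_\mu}-m\,g_\mu^* A\bigr)$, so a trace-surjective section witnessing global F-regularity of $Z_\mu$ for the divisor $mA$ pulls back, via $g_\mu^*$ and the projection formula, to a section on $X_\mu$; the exceptional contribution $nE$ is absorbed, because once $p^e > n$ a section vanishing to order $n$ along an exceptional prime divisor still has unit Frobenius trace there (the trace divides the vanishing order by $p^e$). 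Combining these gives the required splitting on $X_\mu$.

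The main obstacle is precisely this converse direction of (2), namely producing the splittings along the $g_\mu$-exceptional divisors. Here the effectivity of $-K_X + g^* K_Z$, equivalently $K_{X/Z} \le 0$, is essential: it makes the relative–canonical twist $(p^e-1)\Gamma$ effective, so that the anticanonical test sections only grow under pullback and the trace can swallow the bounded exceptional vanishing as $e \to \infty$; a positive discrepancy would instead force vanishing that grows with $p^e$ and destroy F-regularity of $X$. The remaining work is bookkeeping, namely verifying that $\{\,m\,g_\mu^* A + nE\,\}$ is cofinal and that the divisorial pullbacks behave correctly in codimension one; both simplify in the surface case needed for Theorem \ref{Freg}, where the exceptional locus is a finite union of curves contracted to points.
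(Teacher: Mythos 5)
Your overall architecture — spread out over a finitely generated $\Z$-subalgebra $A \subset k$, reduce both assertions to the corresponding statements for $g_\mu \colon X_\mu \to Z_\mu$ over $k(\mu)$ — is exactly the paper's first step; the difference is that the paper then disposes of both characteristic-$p$ statements by citing \cite[Propositions 1.2 and 1.4]{HWY}, whereas you attempt to reprove them, and both of your characteristic-$p$ arguments contain genuine gaps. In part (1), the claimed equality $g_{\mu*}\mathcal{O}_{X_\mu}(D') = \mathcal{O}_{Z_\mu}(D)$ for the strict transform $D'$ is false whenever the center of an exceptional divisor lies in $\Supp D$: a section of $\mathcal{O}_{Z_\mu}(D)$ may have a pole along such a center and then acquires a pole along the exceptional divisor, so it does not lift. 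Concretely, for the blow-up of a point $p \in \mathbb{P}^2$ and $D = 2L$ with $L$ a line through $p$, one has $g_* \mathcal{O}_X(2\widetilde{L}) \subsetneq \mathcal{O}_{\mathbb{P}^2}(2L)$ (the pushforward imposes multiplicity $2$ at $p$ and is not even reflexive). The correct inclusion is $g_{\mu*}\mathcal{O}_{X_\mu}(D') \subseteq \mathcal{O}_{Z_\mu}(D)$, and pushing forward your splitting only splits $\mathcal{O}_{Z_\mu} \to F^e_* g_{\mu*}\mathcal{O}_{X_\mu}(D')$, which is weaker than what is needed. The standard fix is to take $D'$ to be the strict transform plus sufficiently large multiples of the exceptional primes, so that the reverse inclusion $\mathcal{O}_{Z_\mu}(D) \subseteq g_{\mu*}\mathcal{O}_{X_\mu}(D')$ holds, and then restrict the pushed-forward splitting along $F^e_*\mathcal{O}_{Z_\mu}(D) \hookrightarrow F^e_* g_{\mu*}\mathcal{O}_{X_\mu}(D')$.

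The more serious gap is the ``absorption of $nE$'' step in part (2). A splitting of $\mathcal{O}_{X_\mu} \to F^e_*\mathcal{O}_{X_\mu}(m\,g_\mu^*A + nE)$ corresponds to a section of $\mathcal{O}_{X_\mu}\bigl((1-p^e)K_{X_\mu} - m\,g_\mu^*A - nE\bigr)$ with unit trace, and the only vanishing of your pulled-back section $g_\mu^* s$ along an exceptional prime $E_i$ comes from the term $(p^e-1)\Gamma$. If $\mathrm{ord}_{E_i}\Gamma = 0$ — the crepant case, e.g.\ the $(-2)$-curves over du Val points, which is precisely the situation arising in this paper since Theorem \ref{Freg} applies the lemma to a minimal resolution, where all discrepancies are $\leq 0$ with equality over canonical singularities — then $g_\mu^* s$ has no vanishing at all along $E_i$, hence does not lie in the required sheaf for \emph{any} $e$; taking $p^e > n$ does not help, because your ``trace divides the vanishing order by $p^e$'' remark concerns the image of the trace map, not membership in $\mathcal{H}om\bigl(F^e_*\mathcal{O}_{X_\mu}(m\,g_\mu^*A + nE), \mathcal{O}_{X_\mu}\bigr)$. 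The standard repair is to exploit that global F-regularity of $Z_\mu$ gives splittings for \emph{all} effective divisors on $Z_\mu$: apply it to $mA + D_c$ with $D_c$ of high multiplicity at the centers $g_\mu(E_i)$, so that $g_\mu^*(mA + D_c) \geq m\,g_\mu^*A + nE$, or invoke the criterion that a single effective divisor $C$ with $\mathcal{O} \to F^e_*\mathcal{O}(C)$ split and with strongly F-regular complement suffices (\cite[Theorem 3.9]{SS}), applied to $C = g_\mu^* A'$ with $A'$ ample through all centers; this is in effect how \cite[Proposition 1.4]{HWY} proceeds. As written, your proof of the converse direction of (2) fails exactly in the cases the paper needs.
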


\begin{proof}
Recall that for a finitely generated $\Z$-subalgebra $A$ of $k$, we obtain a birational morphism $g_{\mu} \colon X_{\mu} \rightarrow Z_{\mu}$ between normal projective varieties of finite type over $k(\mu)$ for every closed point $\mu \in \Spec A$. Recall that by the definition, if $X$ is of globally F-regular type, then $X_{\mu}$ is globally F-regular for every closed point of some dense open subset in $\Spec A$. By \cite[Proposition 1.2]{HWY}, $Z_{\mu}$ is globally F-regular for every $\mu$, so $Z$ is of globally F-regular type. Similarly, we can show assertion (2) by using \cite[Proposition 1.4]{HWY} since the assumptions still hold after the reduction to characteristic $p$.
\end{proof}

\begin{lemma}\label{Qfact}
Every projective surface of globally F-regular type is $\mathbb{Q}$-factorial.
\end{lemma}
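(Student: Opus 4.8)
The plan is to reduce the statement to a purely local question about the singularities of $Y$ and to show that these singularities are rational, since a normal surface with at worst rational singularities is automatically $\mathbb{Q}$-factorial. First I would observe that $Y$ is normal: being of globally F-regular type, each local ring $\mathcal{O}_{Y,y}$ is of strongly F-regular type (the reduction-mod-$p$ counterpart of Remark \ref{Frem}(1)), and strongly F-regular local rings are normal. As $Y$ is a surface, its singular locus is a finite set of closed points, so $\mathbb{Q}$-factoriality amounts to checking, at each singular point $y$, that every Weil divisor is $\mathbb{Q}$-Cartier near $y$; equivalently, that the local divisor class group $\Cl(\mathcal{O}_{Y,y})$ is torsion.

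The key step is to show that $Y$ has at worst rational singularities. Here I would again pass to the local rings $\mathcal{O}_{Y,y}$, which are of strongly F-regular type. Strongly F-regular rings are F-rational, and by Smith's theorem that F-rational rings have rational singularities (applied to the reductions, together with the equivalence between rational singularities and F-rational type in characteristic zero; see \cite{Sm}), each $\mathcal{O}_{Y,y}$ is a rational singularity. I emphasize that this argument does \emph{not} require $Y$ to be $\mathbb{Q}$-Gorenstein, so it avoids the circularity that would arise from trying to apply Lemma \ref{Fklt} directly (being klt already presupposes the $\mathbb{Q}$-Gorenstein property we are trying to establish).

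Finally I would invoke the classical fact that a rational surface singularity is $\mathbb{Q}$-factorial. Concretely, let $f \colon X \to Y$ be the minimal resolution and let $E_1, \dots, E_n$ be the $f$-exceptional curves over a fixed singular point. Given a Weil divisor $D$ on $Y$ with strict transform $\widetilde{D}$, the negative-definiteness of the intersection matrix $(E_i \cdot E_j)$ lets me solve for rational numbers $a_i$ with $(\widetilde{D} + \sum a_i E_i) \cdot E_j = 0$ for all $j$; rationality of the singularity ($R^1 f_* \mathcal{O}_X = 0$) then forces a suitable multiple of $\widetilde{D} + \sum a_i E_i$ to be Cartier and to descend to a $\mathbb{Q}$-Cartier multiple of $D$. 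Equivalently, $\Cl(\mathcal{O}_{Y,y})$ is isomorphic to the finite cokernel of the intersection matrix, hence torsion. Since this holds at every singular point, $Y$ is $\mathbb{Q}$-factorial.

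I expect the main obstacle to be the second step: cleanly deducing rationality of the singularities of $Y$ from globally F-regular type, that is, transporting the strong F-regularity of the local rings through reduction mod $p$ and back to the characteristic-zero statement that $\mathcal{O}_{Y,y}$ is a rational singularity, without inadvertently assuming the $\mathbb{Q}$-Gorenstein property. Once rationality is secured, the descent to $\mathbb{Q}$-factoriality is a standard consequence of the negative-definiteness of the resolution's intersection form and the vanishing $R^1 f_* \mathcal{O}_X = 0$.
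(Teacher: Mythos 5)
Your proposal is correct and follows essentially the same route as the paper, which simply cites \cite[Corollary 5.3]{Sm} for the fact that a projective surface of globally F-regular type has at worst rational singularities and \cite[Theorem 4.6]{B01} for the classical fact that rational surface singularities are $\mathbb{Q}$-factorial. You merely unpack these two citations (F-rationality of strongly F-regular local rings plus Smith's theorem, and the finiteness of the local class group via the negative-definite intersection matrix and $R^1 f_* \mathcal{O}_X = 0$), and your observation that no $\mathbb{Q}$-Gorenstein hypothesis is needed---so there is no circularity with Lemma \ref{Fklt}---is exactly why the paper's proof is sound as stated.
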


\begin{proof}
Every projective surface of globally F-regular type contains at worst rational singularities by \cite[Corollary 5.3]{Sm}, so it is $\Q$-factorial by \cite[Theorem 4.6]{B01}.
\end{proof}

\begin{lemma}\label{Fbig}
Let $Y$ be a smooth projective surface of globally F-regular type. Then, $Y$ is rational and the anticanonical divisor $-K_Y$ is big.
\end{lemma}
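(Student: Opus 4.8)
The plan is to establish the two conclusions separately, deducing rationality first and then bigness of $-K_Y$. For rationality, I would argue that a smooth projective surface of globally F-regular type cannot have positive irregularity or positive geometric genus, so that it is rational by Castelnuovo's criterion. The key input is Lemma \ref{Fklt}, which guarantees that $Y$ (being smooth, hence $\Q$-Gorenstein) is of globally F-regular type with at worst klt singularities; more to the point, I would pass to positive characteristic via the reduction setup and use that each $Y_\mu$ is globally F-regular, so that its section ring is strongly F-regular and in particular Cohen-Macaulay with vanishing of the relevant higher cohomology. Concretely, a globally F-regular projective variety has $H^i(Y_\mu, \mathcal{O}_{Y_\mu})=0$ for $i>0$ by the splitting of Frobenius (this is the Frobenius-split vanishing, see \cite[Section 5]{Sm}), which forces $q(Y_\mu)=p_g(Y_\mu)=0$. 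Since these Hodge numbers are preserved under reduction modulo $p$ for a dense set of primes, we get $q(Y)=p_g(Y)=0$ in characteristic zero, whence $Y$ is rational.

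For the bigness of $-K_Y$, I would exploit that strong F-regularity forces the section ring to be a strongly F-regular, hence normal and (by a theorem of Smith) with the appropriate positivity of the anticanonical class. The cleanest route is to observe that a globally F-regular variety is log Fano in a suitable sense: by \cite[Theorem 1.1]{SS} every globally F-regular variety over an F-finite field is of Fano type, which after reduction means there is an effective boundary making $-(K_{Y_\mu}+\Delta_\mu)$ ample. This already gives that $-K_{Y_\mu}$ is big, and bigness is an open numerical condition detectable by the growth of $h^0(-mK_{Y_\mu})$, which is preserved under the reduction correspondence. Alternatively, and perhaps more self-containedly, I would use that $Y$ is rational with $q=p_g=0$ together with the Riemann-Roch computation
\begin{equation*}
\chi(\mathcal{O}_Y(-mK_Y)) = \chi(\mathcal{O}_Y) + \tfrac{1}{2}(-mK_Y)\cdot(-mK_Y - K_Y) = 1 + \tfrac{m(m+1)}{2}K_Y^2,
\end{equation*}
so that as soon as $K_Y^2>0$ the anticanonical sections grow quadratically and $-K_Y$ is big; the positivity $K_Y^2>0$ itself follows because the F-regularity of the section ring under reduction rules out $K_Y^2\le 0$ (a surface with $K_Y^2\le 0$ and $q=p_g=0$ would not remain globally F-regular after reduction).

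The main obstacle I anticipate is making the passage between characteristic zero and characteristic $p$ fully rigorous for the numerical invariants at stake, namely ensuring that both the cohomological vanishings establishing $q=p_g=0$ and the positivity $K_Y^2>0$ (equivalently the bigness of $-K_Y$) descend from the dense set of good fibers $Y_\mu$ back to $Y$. The cohomology dimensions $h^i(\mathcal{O}_{Y_\mu})$ and the self-intersection $K_{Y_\mu}^2$ are constant on a dense open subset of $\Spec A$ by the generic flatness and base-change results already invoked in the reduction discussion preceding the lemma, so these invariants agree with those of $Y$; the remaining care is simply to invoke the correct F-split vanishing statement from \cite{Sm} and \cite{SS} for globally F-regular varieties. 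Once that dictionary is in place, both conclusions follow mechanically, and I would expect the proof to be short, citing Lemma \ref{Fklt}, the reduction formalism, and the Frobenius-splitting cohomology vanishing as the three essential ingredients.
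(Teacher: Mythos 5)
There are two genuine gaps here, and the second is fatal to both of your proposed routes to bigness. First, the rationality step: from $q(Y)=p_g(Y)=0$ you conclude rationality ``by Castelnuovo's criterion,'' but Castelnuovo's criterion requires $q=P_2=0$, and $q=p_g=0$ does not imply $P_2=0$ (Enriques surfaces, or Godeaux surfaces of general type, satisfy $q=p_g=0$). You must kill the plurigenera. The paper does this with a purely numerical observation you never make: for any ample $L$, the intersection number $-K_Y\cdot L=-K_{Y_\mu}\cdot L_\mu>0$ because intersection numbers, unlike cohomology dimensions, are honestly constant in the flat family; hence $K_Y$ is not pseudo-effective and all $P_m(Y)$ vanish (alternatively, $P_m(Y_\mu)=0$ since $-K_{Y_\mu}$ is big, and \emph{vanishing} of $h^0$ descends by semicontinuity). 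This gap is patchable, but as written the deduction is false.

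The bigness step is where the proposal genuinely breaks down. Your first route asserts that bigness of $-K_{Y_\mu}$ ``is preserved under the reduction correspondence'' via the growth of $h^0(-mK_{Y_\mu})$. Semicontinuity goes the wrong way: $h^0(Y_\mu,-mK_{Y_\mu})\geq h^0(Y,-mK_Y)$, so quadratic growth on the special fibers does not descend to $Y$; and although for each fixed $m$ equality holds for $\mu$ in a dense open $U_m\subset\Spec A$, this open set depends on $m$, so no uniform growth statement for $Y$ can be extracted. This failure of descent of positivity is precisely the difficulty the lemma exists to overcome, and is why the paper (and, independently, Okawa and Gongyo--Takagi) argue nontrivially. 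Your fallback via Riemann--Roch is worse: the input $K_Y^2>0$ is simply false for the class of surfaces under consideration. For example, the blow-up $Y$ of $\P^2$ at $n\geq 10$ suitable points on a conic carries an effective $\Q$-divisor $\Delta$ with $(Y,\Delta)$ a klt del Pezzo pair (\cite{TVV11}), hence $Y$ is of globally F-regular type by \cite[Theorem 1.2]{SS}, yet $K_Y^2=9-n<0$; here $-K_Y$ is big because the positive part $P$ of its Zariski decomposition has $P^2>0$, while $K_Y^2<0$. So your claim that ``F-regularity rules out $K_Y^2\le 0$'' is not merely unproved but false, and bigness of $-K_Y$ cannot be read off from the sign of $K_Y^2$. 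The paper's actual route transfers only numerical data: pseudo-effectivity of $-K_Y$ as above, rationality via $h^1(\mathcal{O}_Y)=0$ (\cite[Corollary 5.5]{Sm}) together with $P_2=0$, and then Sakai's structure theory (\cite[Theorems 3.1 and 3.4]{Sak84}) to exclude $\kappa(-K_Y)\in\{0,1\}$: in the $P=0$ case an anticanonical member $D$ has negative definite support, forcing $\kappa(D_\mu)=0$ against bigness of $-K_{Y_\mu}$; in the $P\neq 0$ case one contracts to a nine-point blow-up $\bar Y$ of $\P^2$ with $-K_{\bar Y}$ nef and $(-K_{\bar Y})^2=0$, contradicting nef-and-big on the fiber since $(-K_{\bar Y_\mu})^2=(-K_{\bar Y})^2=0$. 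Both contradictions are numerical, hence survive reduction --- the feature your proposal is missing.
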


\begin{proof}
First, we claim that $-K_Y$ is pseudo-effective. For any ample divisor $L$ on $Y$, there exists a finitely generated $\Z$-subalgebra $A$ of $k$ such that $L_{\mu}$ is an ample divisor on $Y_{\mu}$ for every closed point $\mu \in \Spec A$. Since $-K_{Y_{\mu}}$ is big, we have
$$-K_Y.L = -K_{Y_{\mu}}.L_{\mu}>0,$$
which proves the claim.  Note that we also proved that $K_Y$ is not pseudo-effective.

By \cite[Corollary 5.5]{Sm}, we have $h^1(\mathcal{O}_Y)=0$. Thus, $Y$ is rational by Castelnuovo's rationality criterion. In this case, by \cite[Lemma 3.1]{Sak84}, $-K_Y$ is an effective $\Q$-divisor. So, we can take the Zariski decomposition
$-K_Y = P+N$ where the divisors $P$ and $N$ are $\mathbb{Q}$-divisors.

Suppose that $-K_Y$ is not big. Then, since $\kappa(-K_Y) = 0$ or $1$, we have either $P=0$ or $P \neq 0$.

(1) The case  $P=0$. By \cite[Theorem 3.4]{Sak84}, there is an effective divisor $D \in |-mK_Y|$ for some integer $m>0$ such that the intersection matrix of irreducible components of $D$ is negative definite. There exists a finitely generated $\Z$-subalgebra $A$ of $k$ such that for every irreducible component $C$ of $D$, the effective divisor $C_{\mu}$ is an integral curve on $Y_{\mu}$ for every closed point $\mu \in \Spec A$. Then, $\kappa(D_{\mu})=0$ since the intersection matrix of irreducible components of $D_{\mu}$ is negative definite. On the other hand, $-K_{Y_{\mu}}$ is big, which is a contradiction.

(2) The case $P \neq 0$. By \cite[Theorem 3.4]{Sak84}, there is a birational morphism $Y \rightarrow \bar{Y}$ such that $-K_{\bar{Y}}$ is nef and $\bar{Y}$ is obtained from $\P^2$ by blowing up $9$ points. Thus, $(-K_{\bar{Y}})^2=0$. Since $-K_{\bar{Y}}$ is an effective $\Q$-divisor, there exists a finitely generated $\Z$-subalgebra $A$ of $k$ such that $-K_{Y_{1, \mu}}$ is nef for every closed point $\mu \in \Spec A$. Since $\bar{Y}$ is of globally F-regular type by Lemma \ref{Fmor}, $-K_{\bar{Y}_{\mu}}$ is nef and big. However, we have
$$(-K_{\bar{Y}_{\mu}})^2=(-K_{\bar{Y}})^2=0,$$
so we get a contradiction. Therefore, $-K_Y$ is big.
\end{proof}

\begin{remark}
An idea from \cite{GT} was used in proving pseudo-effecitivity of $-K_Y$.
\end{remark}

Theorem \ref{Freg} directly follows from Lemma \ref{Fbig} by using  \cite[Theorem 1]{TVV11} and \cite[Theorem 1.2]{GOST}. Here, we give a direct proof by using Theorem \ref{klt}.

\begin{proof}[Proof of Theorem \ref{Freg}]
By Lemma \ref{Qfact}, $Y$ is $\mathbb{Q}$-factorial, so it is $\mathbb{Q}$-Gorenstein. Let $f \colon X \rightarrow Y$ be the minimal resolution. Then, $-K_X + f^{*}K_Y$ is effective. By Lemma \ref{Fmor} (2), $X$ is of globally F-regular, so by Lemma \ref{Fbig}, the anticanonical divisor $-K_X$ is big. By applying Lemma \ref{Fmor} (1), we conclude that the anticanonical model $Y_0$ of $X$ is also of globally F-regular. Thus, $Y_0$ is a klt del Pezzo surface by Lemma \ref{Fklt}. By Theorem \ref{klt}, the big anticanonical surface $X$ belongs to the class $LD$ which is shown to be the same as the class $NS$, and then, by applying Lemma \ref{image}, we get the conclusion.
\end{proof}

\begin{remark}
The same argument can be applied in positive characteristic, since all the Lemmas in this section also hold for globally F-regular surfaces.
\end{remark}

\smallskip

\noindent\textbf{Acknowledgments.} The authors would like to thank Shinnosuke Okawa and Yoshinori Gongyo for pointing out a gap in the previous version of Lemma \ref{Fbig}, sending the authors their preprints, and useful comments. DongSeon Hwang also thanks Junmyeong Jang and Christian Liedke for useful comments and discussions. Jinhyung Park would like to thank his advisor Sijong Kwak for warm encouragement.

\end{document}